\newtheorem{thm}{Theorem}
\newtheorem{lemma}[thm]{Lemma}
\newtheorem{prop}[thm]{Proposition}
\newtheorem{cor}[thm]{Corollary}
\newtheorem{definition}[thm]{Definition}
\newtheorem{definitions}[thm]{Definitions}
\theoremstyle{remark}
\newtheorem{remark}[thm]{Remark}
\newtheorem{remarks}[thm]{Remarks}
\newtheorem{example}[thm]{Example}
\newtheorem{examples}[thm]{Examples}
\numberwithin{thm}{section}
\numberwithin{equation}{section}
\newcommand{\R}{\mathbb{R}}
\newcommand{\N}{\mathbb{N}}
\newcommand{\Rec}{\mathfrak{R}}
\newcommand{\Poi}{\mathbf{P}}
\newcommand{\Ca}{\mathbf{C}}
\newcommand{\1}{\mathbf{1}}
\newcommand{\boulette}[1]{$\bullet$\ Proof of #1.}
\newcommand{\Boulette}[1]{\par\noindent $\bullet$\ Proof of #1.}
\newcommand\haut[3]{#1^{#2#3}}
\newcommand\bas[3]{#1_{#2#3}}
\newcommand\pont[3]{#1^{#2#3}}
\renewcommand\AA{\mathcal{A}}
\newcommand\XX{\mathcal{X}}
\newcommand{\YY}{\mathcal{Y}}
\newcommand\XtX{\XX^2}
\newcommand\XXX[2]{X_{[#1,#2]}}
\newcommand\AAA[2]{\AA_{[#1,#2]}}
\newcommand\dPR{\frac{dP}{dR}}
\newcommand\pt{\textrm{-a.e.}}
\newcommand\pp{\textrm{-a.e.}}
\newcommand\OO{\Omega}
\newcommand\IX{\int_{\XX}}
\newcommand\IXX{\int_{\XX^2}}
\newcommand\pf{_{\#}}
\newcommand{\Ploop}{P _{ \mathrm{loop}}}
\newcommand{\Rp}{R ^{ \pi}}
\newcommand{\probmeas}{measure}
\begin{document}

\title[Reciprocal processes]{Reciprocal processes. A measure-theoretical point of view}

\author{Christian L\'eonard }
\address{Modal-X. Universit\'e Paris Ouest. B\^at.\! G, 200 av. de la R\'epublique. 92001 Nanterre, France}
\email{christian.leonard@u-paris10.fr}

\author{Sylvie R\oe lly}
\address{Institut f\"ur Mathematik der Universit\"at Potsdam. Am Neuen Palais 10. 14469 Potsdam, Germany}
\email{roelly@math.uni-potsdam.de}

\author{Jean-Claude Zambrini}
\address{GFM, Universidade de Lisboa, Av. Prof. Gama Pinto 2. 1649-003 Lisboa, Portugal}
\email{zambrini@cii.fc.ul.pt}

\thanks{First author was partly supported by the project GeMeCoD, ANR 2011 BS01 007 01, 
and last author by PTDC/MAT/120354/2010.
The authors are also thankful to the DFH for its support through the French-German Doktorandenkolleg CDFA 01-06.
}
 
 \keywords{Markov processes,  reciprocal processes, time symmetry}
 \subjclass[2010]{}

\begin{abstract}
This is a \emph{survey paper} about reciprocal processes. 
The bridges of a Markov process are also Markov. But an arbitrary mixture of these bridges fails to be Markov in general. 
However, it still enjoys the interesting properties of a {\em reciprocal process}.
\\
The structures of Markov and reciprocal processes are recalled with emphasis on their time-symmetries.  A review of the main properties of the  reciprocal processes is presented. Our measure-theoretical  approach allows for a unified treatment of the diffusion and  jump processes.  
Abstract results are illustrated by  several examples and counter-examples.       

\end{abstract}

\maketitle

\tableofcontents

\section*{Introduction} 
This is a survey paper about reciprocal processes.

The Markov property is a standard probabilistic  notion since its formalization at the beginning of the 20th century.
It was presented  for the first time in a time-symmetric way by Doob  \cite{Doob53}  in 1953, see \eqref{eq-01}. 
This remarkable point of view, which is often replaced by its time-asymmetric counterpart, was developed further by numerous  authors, 
see the references in  the monograph by Chung and Walsh \cite{ChW05} and also Wentzell \cite{Wen81}.

Two decades after  Markov, Bernstein \cite{Bern32} 
introduced  in the particular framework of diffusion processes the notion of \emph{reciprocal process}
\footnote{This terminology is due to Bernstein. They are also sometimes called Bernstein processes.} 
which, by its very definition,  is 
stable with respect to time reversal, see \eqref{eq-rec}.  
During his talk at the International Congress in Zürich, he qualified   the  dynamics of reciprocal processes as  {\it stochastiquement parfaite}\footnote{stochastically perfect.}. Bernstein's motivation for introducing these time-symmetric notion of random process is rooted into an outstandingly  insightful article by Schrödinger  \cite{Sch31} entitled {\it ``\"Uber die Umkehrung der Naturgesetze''} published in 1931.

The particular case of Gaussian reciprocal processes was extensively treated by Jamison \cite{Jam70}. This was also undertaken by Chay \cite{Chay72} who named these processes ``quasi-Markov random fields" since they are  random fields defined on a compact time interval. See further comments and references at the beginning of Section \ref{sec-rpm} and Subsection \ref{sec:recClass}. Later, in the excellent seminal  paper \cite{Jam74}  entitled  ``Reciprocal processes'', Jamison provided a rigorous definition of these processes and derived  their main properties in an abstract setting.  
This led to many studies in specific contexts, such as diffusion processes  \cite{TZ97b,TZ97a,CWZ,Th02}, Lévy processes \cite{PZ04} or pure jump processes  \cite{CLMR14,CL14}.

This review paper revisits \cite{Jam74} and provides some new results. We  present a unifying measure-theoretical approach to reciprocal processes. Unlike Jamison's approach, which is based on the usual forward filtration, our  point of view allows for a genuinely time-symmetric treatment. 
We look at Markov and reciprocal processes as \emph{path measures}, i.e.\ measures on the path space. Consequently, instead of considering Markov and reciprocal random processes as usual, we consider  Markov and reciprocal path measures.

 Theorem \ref{res-20} illustrates our perspective. Its statement gives a  characterization of the reciprocal
measures which are dominated by a given  Markov measure in terms of time-symmetric versions of Doob's $h$-transforms. 
We also illustrate our abstract results with several examples and counter-examples.       

\subsubsection*{A possible extension}
We   focus onto  \emph{probability} path measures. As a consequence, in the present paper we drop the word probability: any path probability measure, Markov probability measure or reciprocal probability measure  is simply called a path measure, a Markov measure or a reciprocal measure.\\
Our results can easily be extended to 
$\sigma$-finite path measures, e.g.\  processes admitting an unbounded  measure as their initial law. 
For further  detail about this generalized 
framework, see  \cite{Leo12b}.

\subsection*{Outline of the paper}
Section \ref{sec-Markov} is devoted to the structure of  Markov \probmeas s and to their time-symmetries. 
Reciprocal measures are introduced at Section \ref{sec-rpm} and their relationship with Markov measures is investigated. 
At Section \ref{sec-ent}, we sketch the tight connection
between reciprocal classes and some specific entropy minimization problems. 
So doing, we step back to Schrödinger's way of looking at  some statistical
physics problems with a time-symmetric viewpoint.

\subsection*{Notation}

We consider the  set $\Omega=D([0,1],\XX)\subset \XX^{[0,1]}$ of càdlàg paths defined on the finite time interval 
$[0,1]$ with state space $\XX$, which is assumed to be Polish and equipped  with its Borel $\sigma$-algebra.  
As usual $\Omega$ is endowed with the canonical filtration  $\AA$ generated by the  \emph{canonical process} $X=(X_t)_{t\in[0,1]}$ : 
$$
 X_t(\omega):=\omega_t, \quad \omega=(\omega_s)_{s\in[0,1]}\in\Omega, \, t\in[0,1].
$$
For any subset  $\mathcal{S}\subset [0,1]$ and for any \probmeas\   $P$ on  $\Omega$ one denotes
\begin{itemize}
    \item $X_\mathcal{S}=(X_s)_{s\in\mathcal{S}}$ the 
    canonical process restricted to $\mathcal{S},$
    \item $\AA_\mathcal{S}=\sigma(X_s;s\in\mathcal{S})$ the   $\sigma$-algebra  of the events observed during $\mathcal{S},$
    \item $P_\mathcal{S}=(X_\mathcal{S})\pf P$ the restriction of $P$
    to $\Omega_\mathcal{S}:=X_\mathcal{S}(\Omega)$.
\end{itemize}
We have denoted $f\pf m:=m\circ f ^{ -1}$ the image of the measure $m$ with respect to the measurable mapping $f$.\\
 For   $\mathcal{S}= [s,u]\subset [0,1]$ we use the particular notations:
\begin{itemize}
\item   $\XXX su:=(X_t; s\le t\le u)$
    \item $\AAA su:=\sigma(\XXX su),$ the   $\sigma$-algebra generated by the events that occurred between time  $s$ and time $u$
    \item $\AA_s:=\sigma(X_s),$ the   $\sigma$-algebra generated by the events that occur at time  $s$
    \item  $P_s:=(X_s)\pf P$ is the  projection of $P$ at time  $s$
    \item $\bas Psu:=(X_s,X_u)\pf P$ is the marginal law of  $P$ at times  $s$ and $u$ simultaneously  \\
    ($P_{01}$ is therefore the endpoint marginal  law of the process)
    \item $P_{[s,u]}:= (X_{[s,u]})\pf P$ is the projection of $P$ on the time interval $[s,u]$
\end{itemize}
The \probmeas\ 
$$
P^*=(X^*)\pf P
$$
 is the law under  $P$ of the càdlàg transform $X^*$ of the time reversed canonical process.

\section{Time-symmetry of Markov \probmeas s} \label{sec-Markov}

We present  structural properties of both Markov probability  measures and  their bridges.
Emphasis is put on their time-symmetry which has already been studied in specific frameworks, see for instance
\cite{ChW05}.

\subsection{Definition and basic properties } 

Let us begin with the symmetric definition of the Markov property.
\begin{definition}[Markov \probmeas] \label{def-PMark} 
A probability measure  $P$ on  $\Omega$ is said to be  \emph{Markov} 
 if for any $t\in[0,1]$ and for any events $A\in\AAA 0t, B\in \AAA
    t1$
    \begin{equation}\label{eq-01}
    P(A\cap B\mid X_t)=P(A\mid X_t)P(B\mid X_t), \quad P\pp 
    \end{equation}
\end{definition}
This means that, knowing the present state  $X_t$, the future and  past   informations  $\AAA
    t1$  and  $\AAA 0t$, are  $P$-independent.  

In   Theorem \ref{res-02} below, we recall equivalent descriptions of the Markov property. 
In particular, the standard identity (2)  states that a Markov process forgets its past history. 

\begin{thm}\label{res-02} Let $P$ be a probability measure   on $\Omega$.
Then the following are equivalent:
\begin{itemize}
    \item[(1)] 
\,The \probmeas\   $P$ is Markov.
     \item[(1*)]  
\, The time-reversed \probmeas\  $P^*$ is Markov.
\vspace{3mm}

    \item[(2)] 
For all $0\le t\le1$ and all sets $B\in \AAA t1$,    
\begin{equation*}
P(B\mid\XXX 0t)=P(B\mid X_t), \quad  P\pp
\end{equation*}
	\item[(2*)] 
For all  $0\le t\le1$ and all  sets $A\in\AAA 0t,$
    \begin{equation*}
     P(A\mid\XXX t1)=P(A\mid X_t), \quad  P\pp
    \end{equation*}


    \item[(3)] 
\, For all $0\le s\le u\le1$  and all sets $A\in\AAA 0s, C\in\AAA
    u1$
    $$
    P(A\cap C\mid\XXX su)=P(A\mid X_s)P(C\mid X_u), \quad P\pp
    $$
\end{itemize}
\end{thm}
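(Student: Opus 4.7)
The plan is to establish the five equivalences in three stages. The symmetry of Definition \ref{def-PMark} in past and future events directly yields (1)$\Leftrightarrow$(1*): the canonical time-reversal sends $\AAA 0t$ to the corresponding future $\sigma$-algebra for $P^*$, and the splitting identity \eqref{eq-01} is invariant under exchanging the roles of $\AAA 0t$ and $\AAA t1$. Hence it suffices to prove (1)$\Leftrightarrow$(2)$\Leftrightarrow$(2*) and (1)$\Leftrightarrow$(3); by time-reversal symmetry, establishing any one of (2) or (2*) simultaneously gives the other.

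The equivalence of (1), (2) and (2*) rests on the standard measure-theoretic fact that two sub-$\sigma$-algebras $\mathcal{F},\mathcal{G}$ are conditionally independent under $P$ given a third sub-$\sigma$-algebra $\mathcal{H}$ if and only if $P(F\mid\mathcal{G}\vee\mathcal{H})=P(F\mid\mathcal{H})$ for every $F\in\mathcal{F}$. Applying this lemma with $\mathcal{F}=\AAA 0t$, $\mathcal{G}=\AAA t1$ and $\mathcal{H}=\sigma(X_t)$, and using that $\sigma(X_t)$ sits inside both $\AAA 0t$ and $\AAA t1$ so that the joins collapse to the larger algebra, one converts (1) into (2*); swapping the roles of $\mathcal{F}$ and $\mathcal{G}$ yields (2).

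Finally, (3)$\Rightarrow$(1) is obtained by specialising $s=u=t$, so that $\XXX su$ reduces to $X_t$. Conversely, from (2) and (2*) one derives (3) by iterated conditioning: given $A\in\AAA 0s$, $C\in\AAA u1$ with $s\le u$,
\begin{align*}
P(A\cap C\mid\XXX su)
&= E\!\left[\1_A\,P(C\mid\AAA 0s\vee\AAA su)\mid\XXX su\right]\\
&= E\!\left[\1_A\,P(C\mid X_u)\mid\XXX su\right]\\
&= P(C\mid X_u)\,P(A\mid\XXX su)\;=\;P(C\mid X_u)\,P(A\mid X_s),
\end{align*}
where the second equality uses (2) applied at time $u$ (noting that $\AAA 0s\vee\AAA su=\AAA 0u$ and $C\in\AAA u1$), and the last equality uses (2*) applied at time $s$ (since $\XXX su$ is $\AAA s1$-measurable). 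The main technical point throughout is the careful bookkeeping of $\sigma$-algebra inclusions—in particular the symmetric fact that $\sigma(X_t)$ lies inside both $\AAA 0t$ and $\AAA t1$—which is exactly what permits the Markov property to be reformulated both as the symmetric conditional-independence identity (1) and as either of the unilateral forgetting properties (2) and (2*).
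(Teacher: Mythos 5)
Your proof is correct and follows essentially the same route as the paper's: $(3)\Rightarrow(1)$ by specialising $s=u$, the one-sided forgetting properties extracted from the symmetric definition via standard conditional-independence facts, and $(3)$ recovered by iterated conditioning, with $(1^*)$ and $(2^*)$ handled by time-reversal symmetry. The only cosmetic difference is that you close the cycle using $(2)$ and $(2^*)$ jointly, whereas the paper derives $(3)$ from $(2)$ alone by computing $P(A\cap B\cap C)$ in two ways; both arguments are equally valid, and yours is if anything slightly more explicit about the final reduction from conditioning on $X_{[s,u]}$ to conditioning on $X_s$ and $X_u$.
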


\begin{proof}
Let us prove  $(3)\Rightarrow (1)\Rightarrow (2)\Rightarrow (3) .$ 

\Boulette{$(3)\Rightarrow (1)$} It is clear by taking  $s=u$.

\Boulette{$(2)\Rightarrow (3)$} For all sets 
 $A\in\AAA 0s$ and $C\in\AAA u1$ and all sets 
$B\in \AAA su,$ the equality 
$$
P(A\cap B \cap C)
=E[\1_{B}
P(A\cap C\mid\XXX su)]
$$ 
holds. On the other hand,
\begin{eqnarray*}
  P(A\cap B \cap C)
  &=& E [P(A\cap B \cap C \mid\XXX 0u)] \\
  &=& E[\1_{A}\1_{B} P(C \mid \XXX 0u)] \\
  &=&  E[\1_{A}\1_{B} P(C \mid \XXX su)] \\
  &=& E[\1_{B} P(A\mid \XXX su)P(C\mid \XXX su)]
\end{eqnarray*}
where property  (2) is used in the third equality. Therefore
$$
P(A\cap C \mid\XXX su)=P(A\mid \XXX su)P(C\mid \XXX su) .
$$

\Boulette{$(1)\Rightarrow (2)$} 
It is based on standard properties of conditional independence. Nevertheless, for the sake of completeness,
we sketch it.
Let us show that if   \eqref{eq-01} is satisfied then  $P$ forgets its past history.
Let $A\in\AAA 0t$ and
$B\in\AAA t1$ be some events.
$$
E[\1_{A} P(B\mid\XXX 0t)]=P(A\cap B)
=E(P(A \cap B\mid X_t)) =E[P(A\mid X_t)P(B\mid X_t)].
$$ 
On the other hand, 
$$E[\1_{A} P(B\mid X_t)]=E[P(A\mid X_t)P(B\mid X_t)].$$
One obtains for any set $A\in\AAA 0t$,
$E[\1_{A}P(B\mid\XXX 0t)]=E[\1_{A} P(B\mid X_t)],$ which implies that
$P(B\mid\XXX 0t)=P(B\mid X_t).$ This completes the proof of $(1)\Rightarrow (2)$, together with the proof of $(1)\Leftrightarrow(2)\Leftrightarrow(3).$
\\
Eventually the symmetry of the formulation of  (3) leads to the equivalence between
 (2) and ($1^*$). Assertion ($2^*$) corresponds to  (2) applied to $P^*$.
\end{proof}

The  first proof of $(1)\Leftrightarrow(2)$ appears in the monograph by Doob \cite[Eq.\,(6.8) \& (6.8')]{Doob53}.
 Then,  Dynkin \cite{Dyn61} and Chung \cite[Thm.\,9.2.4]{Ch68} 
took it over. Meyer already remarked in  \cite{Mey67} that  {\it the Markov property is invariant under time reversal}.

Identity (2), called \textit{left-sided Markov property}, is often used as the definition of the Markov property.
It may create the inaccurate delusion (frequent in the context of statistical physics) that the Markov property is time-asymmetric. 

Since each Markov process can be defined via its forward and backward transition probability kernels, 
we recall how to construct them in a \emph{symmetric} way.

\begin{definitions} Let  $P$ be a Markov \probmeas.
\begin{enumerate}
    \item The \emph{forward transition probability kernel} associated with $P$
is the family of conditional \probmeas s
$\big(p(s,x;t,\cdot); 0\le s\le t\le1, x\in\XX \big)$ defined for 
any $0\le s\le t\le1$, and $P_s$-almost all $x$, by
\begin{equation*}
    p(s,x;t,dy)=P(X_t\in dy\mid X_s=x).
\end{equation*}
    \item  The \emph{backward transition probability kernel} associated with $P$
is the family of conditional \probmeas s
$\big(p^*(s,\cdot;t,y); 0\le s\le t\le1, y\in\XX \big)$ defined for 
any  $0\le s\le t\le1$, and $P_t$-almost all $y$, by
\begin{equation*}
    p^*(s,dx;t,y):=P(X_s\in dx\mid X_t=y).
\end{equation*}
\end{enumerate}
\end{definitions}

Since these kernels satisfy the celebrated  Chapman-Kolmogorov relations 
\begin{eqnarray}
  \forall 0\le s\le t\le u\le1,&& \nonumber\\ 
p(s,x;u,\cdot)&=&
\int_{\XX} p(s,x;t,dy)p(t,y;u,\cdot) \ \textrm{ for }P_s\textrm{-a.a. } x \label{eq-11f}
 \\
p^*(s,\cdot;u,z)&=& 
\int_{\XX} p^*(s,\cdot;t,y)p^*(t,dy;u,z) \ \textrm{ for }P_u\textrm{-a.a. } z, \label{eq-11b}
\end{eqnarray}
one can construct the \probmeas\  $P$  in the following way.

\begin{prop}\label{res-16}
The Markov \probmeas\  $P$ is uniquely determined by 
one time marginal $P_u$ at some time $u \in [0,1]$, its 
forward transition probability kernels starting from  time  $u$, $\big(p(s,x;t,\cdot); u\le s\le
t\le1, x\in\XX\big)$ and the backward transition probability kernels since time $u$, $\big(p^*(s,\cdot;t,y); 0\le
s\le t\le u, y\in\XX\big).$ \\
Indeed, for any $0\le s_1\le\dots
s_k\le u\le t_1\le\dots t_l\le1$ and $k,l\ge1,$ the finite dimensional projection of $P$ are given by
\begin{equation*}
    P_{s_1,\dots, s_k, u, t_1,\dots, t_l}=
    p^*_{s_1;s_2}\otimes\cdots\otimes
    p^*_{s_{k};u}\otimes P_u\otimes p_{u;t_1}\otimes\cdots\otimes
    p_{t_{l-1};t_l} .
\end{equation*}
where we used the following intuitive notation
$$
P_u\otimes p_{u;t} (dx,dy):= P_u(dx) p(u,x;t,dy).
$$ 
\end{prop}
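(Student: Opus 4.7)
Since $\Omega=D([0,1],\XX)$ is Polish and its Borel $\sigma$-algebra coincides with $\AAA{0}{1}$, a \probmeas\ on $\Omega$ is uniquely determined by its finite-dimensional projections. It therefore suffices to derive the displayed formula for each $P_{s_1,\ldots,s_k,u,t_1,\ldots,t_l}$, since the formula itself then immediately yields uniqueness once $P_u$ and the two families of kernels are prescribed. I would organise the derivation in three stages: split past and future at the distinguished time $u$, identify each conditional law by induction on the number of time points, and reassemble.

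\emph{Splitting at $u$.} Applying \eqref{eq-01} with $t=u$ to the cylinder events $\{X_{s_i}\in dx_i,\ i\le k\}\in\AAA{0}{u}$ and $\{X_{t_j}\in dy_j,\ j\le l\}\in\AAA{u}{1}$ gives conditional independence of past and future given $X_u$. Writing $\mu^-(\cdot|x)$ and $\mu^+(\cdot|x)$ for the conditional laws of $\XXX{s_1}{s_k}$ and $\XXX{t_1}{t_l}$ under $P(\,\cdot\mid X_u=x)$, this reads
$$
P_{s_1,\ldots,s_k,u,t_1,\ldots,t_l}(dx_1,\ldots,dx_k,dx,dy_1,\ldots,dy_l)=\mu^-(dx_1,\ldots,dx_k|x)\,P_u(dx)\,\mu^+(dy_1,\ldots,dy_l|x).
$$

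\emph{Identifying each side.} The forward piece is handled by induction on $l$ using the left-sided Markov property Theorem \ref{res-02}(2): the base case $l=1$ is the very definition of $p(u,x;t_1,\cdot)$, and the inductive step conditions on $X_{t_{l-1}}$ to replace the remaining future conditional by $p_{t_{l-1};t_l}$, yielding $\mu^+=p_{u;t_1}\otimes\cdots\otimes p_{t_{l-1};t_l}$. The backward piece follows symmetrically: by Theorem \ref{res-02}(2*), induction on $k$ starting from $X_u$ and moving leftward in time yields $\mu^-=p^*_{s_k;u}\otimes p^*_{s_{k-1};s_k}\otimes\cdots\otimes p^*_{s_1;s_2}$, which rearranged in chronological order is precisely the factor $p^*_{s_1;s_2}\otimes\cdots\otimes p^*_{s_k;u}$ appearing in the statement. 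Substituting both identifications into the splitting formula gives the claim.

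\emph{Main obstacle.} Nothing deep is hidden, but there is a bookkeeping issue: conditional kernels are only specified up to null sets of the conditioning marginal, so each intermediate identity carries an implicit almost-sure qualifier relative to a different marginal. The Chapman-Kolmogorov relations \eqref{eq-11f}--\eqref{eq-11b} are exactly what guarantees that the composed kernels are well-defined and independent, up to null sets, of the chain of intermediate times chosen in the inductions---this is what allows the inductive arguments to close cleanly.
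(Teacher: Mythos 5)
Your proof is correct; note that the paper itself states Proposition \ref{res-16} without proof, presenting it as an immediate consequence of the Chapman--Kolmogorov relations, and your argument (splitting past and future at $u$ via the conditional independence \eqref{eq-01}, then identifying each factor by induction using Theorem \ref{res-02}(2) and (2*)) is exactly the standard derivation the paper leaves implicit. One small remark: for the uniqueness direction the Chapman--Kolmogorov relations are not really what closes the induction --- the tower property of conditional expectation does that, and \eqref{eq-11f}--\eqref{eq-11b} are themselves consequences of it; they would become essential only for the converse problem of \emph{constructing} a Markov measure from prescribed kernels.
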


\subsection{Path measures dominated by a Markov \probmeas} 

We  consider the problem of knowing  whether a path \probmeas\ which is dominated by  a 
reference Markov \probmeas\ inherits its Markov property.
The following result states a criterion in terms of  the multiplicative structure of the Radon-Nikodym derivative on the path space. 
This question was posed in the general background of continuous time Markov fields in the 70' and 
solved by Dang Ngoc and Yor in \cite[Prop.\,4.3(b)]{DY78}. 
Some abstract variant of this result 
also appeared in  \cite[Thm.\,3.6]{vPvS85}.
In the simpler framework of processes indexed by a discrete time, we refer the reader to Georgii's extended monograph \cite[Ch.\,10, 11]{Ge11}. 

\begin{thm}\label{res-19}
Let $R$ be a reference Markov \probmeas\   and let  $P\ll R$,  a probability measure  dominated by $R$. Then the following statements are equivalent.
\begin{enumerate}
    \item[(1)] The \probmeas\  $P$ is Markov.
    \item[(2)] For any time  $t\in [0,1]$, the Radon-Nikodym derivative of $P$ with respect to  $R$ factorizes in the following way:
 \begin{equation}\label{eq-39}
    \dPR=\alpha_t \, \beta_t, \quad R\pp
\end{equation}
where
$\alpha_t$ and $\beta_t$ are respectively nonnegative 
$\AAA 0t$-measurable and  $\AAA t1$-measurable functions.  
\end{enumerate}
\end{thm}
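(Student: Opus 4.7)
Set $Z:=dP/dR$. The backbone of both directions is the symmetric Markov property of $R$ (Theorem~\ref{res-02}(3) with $s=u=t$): under $R$, the past $\sigma$-algebra $\AAA 0t$ and the future $\sigma$-algebra $\AAA t1$ are conditionally independent given $X_t$.

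The implication $(2)\Rightarrow(1)$ is the easier one, and I would verify directly the symmetric Markov identity~\eqref{eq-01}. Fix $t\in[0,1]$, $A\in\AAA 0t$, $B\in\AAA t1$; by Bayes' formula
$$
P(A\cap B\mid X_t)=\frac{E_R[\alpha_t\1_A\,\beta_t\1_B\mid X_t]}{E_R[\alpha_t\beta_t\mid X_t]}.
$$
Since $\alpha_t\1_A$ is $\AAA 0t$-measurable and $\beta_t\1_B$ is $\AAA t1$-measurable, the conditional independence of past and future under $R$ factorizes both numerator and denominator as products of past and future conditional expectations. Specialising to $B=\Omega$ and to $A=\Omega$ yields the analogous expressions for $P(A\mid X_t)$ and $P(B\mid X_t)$, and the product of those two fractions coincides with the one above, which is exactly~\eqref{eq-01}.

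For $(1)\Rightarrow(2)$, where the substantive work lies, I would proceed by disintegration at time $t$. Because $\XX$, and hence $\Omega=D([0,1],\XX)$, are Polish, regular conditional distributions exist; Markovianity of $R$ and $P$ then gives time-$t$ disintegrations of product type
$$
R=\int R^{0,t}_x\otimes R^{t,1}_x\, R_t(dx),\qquad P=\int P^{0,t}_x\otimes P^{t,1}_x\, P_t(dx),
$$
where $R^{0,t}_x, P^{0,t}_x$ (resp.\ $R^{t,1}_x, P^{t,1}_x$) are the laws of the past (resp.\ future) paths conditioned on $X_t=x$. From $P\ll R$ one deduces $P_t\ll R_t$ and, for $R_t$-a.e.\ $x$, $P^{0,t}_x\ll R^{0,t}_x$ and $P^{t,1}_x\ll R^{t,1}_x$. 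Taking the Radon--Nikodym derivative factor by factor yields, $R$-a.s.,
$$
\dPR=\frac{dP_t}{dR_t}(X_t)\cdot\frac{dP^{0,t}_{X_t}}{dR^{0,t}_{X_t}}(\XXX 0t)\cdot\frac{dP^{t,1}_{X_t}}{dR^{t,1}_{X_t}}(\XXX t1),
$$
and grouping the first two factors into $\alpha_t$ and keeping the third as $\beta_t$ produces the required $(\AAA 0t,\AAA t1)$-factorization, since each ratio depends on $X_t$ which belongs to both $\sigma$-algebras.

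\emph{The main obstacle} is the third display above: turning the product disintegration into a genuine factorization of $dP/dR$ requires a jointly measurable selection of the conditional densities $x\mapsto dP^{0,t}_x/dR^{0,t}_x$ and $x\mapsto dP^{t,1}_x/dR^{t,1}_x$, together with the transfer of absolute continuity to each disintegrated piece. These are standard once Polishness is invoked, but they are where the bookkeeping sits.
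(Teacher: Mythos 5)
Your proposal is correct in substance, and the two directions compare differently with the paper. For $(2)\Rightarrow(1)$ you follow exactly the paper's route (Bayes' formula for $P(\cdot\mid X_t)$, then conditional independence of $\AAA 0t$ and $\AAA t1$ given $X_t$ under $R$), but you pass over one genuine subtlety: only the product $\alpha_t\beta_t$ is known to be $R$-integrable, so splitting $E_R(\alpha_t\beta_t\1_A\1_B\mid X_t)/E_R(\alpha_t\beta_t\mid X_t)$ into the product of two separate fractions requires knowing that $E_R(\alpha_t\mid X_t)$ and $E_R(\beta_t\mid X_t)$ are finite and positive $P$-a.e.; the paper isolates precisely this point as Lemma~\ref{res-24}. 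For $(1)\Rightarrow(2)$ your disintegration into regular conditional distributions of product type is a legitimate variant that lands on the same density: your three factors multiply out to $Z_tZ_t^*/\zeta_t(X_t)$ with $Z_t=E_R(Z\mid\AAA 0t)$, $Z_t^*=E_R(Z\mid\AAA t1)$ and $\zeta_t=dP_t/dR_t$, which is exactly the paper's $\alpha_t\beta_t$. The paper, however, works directly with these conditional expectations --- which are automatically $\AAA 0t$- resp.\ $\AAA t1$-measurable --- and identifies $P$ with $\widetilde P:=\1_{\{\zeta_t(X_t)>0\}}(Z_tZ_t^*/\zeta_t(X_t))\,R$ by checking that the two measures have the same conditional law given $X_t$ and the same time-$t$ marginal. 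This bypasses entirely the joint-measurability selection of $x\mapsto dP^{0,t}_x/dR^{0,t}_x$ that you correctly flag as the residual obstacle; if you take $E_R(Z\mid\AAA 0t)/\zeta_t(X_t)$ as your version of that conditional density, the obstacle disappears and the two proofs essentially coincide. One last bookkeeping point: your conditional densities are only defined for $P_t$-a.e.\ $x$, so an indicator $\1_{\{\zeta_t(X_t)>0\}}$ must be inserted (as in the paper) for the factorization to hold $R$-a.e.\ and not merely $P$-a.e.
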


\begin{proof}
For an alternate proof, see \cite[Prop.\,4.3]{DY78}.
\smallskip\Boulette{$(2)\Rightarrow(1)$} Take two events,  $ A\in\AAA 0t$ and $ B\in\AAA t1$.
In terms of Definition \ref{def-PMark}, we have to show that 
\begin{equation}\label{eq-14}
    P( A\cap B \mid X_t)=P( A\mid X_t)P( B\mid  X_t),\quad P\pp.
\end{equation}
To this aim, note that although the product  $\alpha_t \beta_t $ is $R$-integrable, it is not clear why $\alpha_t$ or $\beta_t$ 
should be separately integrable. To prove this required integrability, one may use the following  lemma of integration theory which assures 
the $R(\cdot\mid X_t)$-integrability of the functions  $\alpha_t$ and $\beta_t,$ $P\pp$

\begin{lemma}\label{res-24} Assume that statement  (2) of the above  theorem holds true. Then,  the functions  $\alpha_t$ and $\beta_t$ are
$R(\cdot\mid X_t)$-integrable $P\pp$
and
$$
\left\{
\begin{array}{l}
  0< E_R(\alpha_t \beta_t \mid X_t)=E_R(\alpha_t \mid X_t)E_R(\beta_t \mid   X_t),  \quad P\pp\\
 0\le E_R(\alpha_t \beta_t \mid X_t)
    =\1_{\{E_R(\alpha_t \mid X_t) E_R(\beta_t \mid X_t)<+\infty\}}
    E_R(\alpha_t \mid X_t)E_R(\beta_t \mid X_t),\quad R\pp\\
\end{array}
    \right.
$$
 \end{lemma}

\begin{proof}
See \cite[\S\,3]{Leo12b}.
\end{proof}

 Lemma \ref{res-24} leads  to
\begin{equation*}
  P(A \cap B\mid X_t)
  = \frac{E_R(\alpha_t \beta_t \, \1_A \1_B\mid X_t)}{E_R(\alpha_t \beta_t\mid X_t)}
  = \frac{E_R(\alpha_t \,\1_A\mid X_t)}{E_R(\alpha_t\mid X_t)}\frac{E_R(\beta_t\, \1_B\mid X_t)}{E_R(\beta_t\mid
  X_t)} , \ P\pp 
\end{equation*}
Choosing $ A=\Omega$ or $ B=\Omega$ in this formula, we obtain 
\begin{eqnarray*}
P( B\mid X_t) &=& E_R(\beta_t \,\1_B\mid X_t)/E_R(\beta_t \mid
X_t),\\
 P( A\mid X_t) &=& E_R(\alpha_t \,\1_A\mid X_t)/E_R(\alpha_t\mid
 X_t).
\end{eqnarray*} 
This completes the proof of \eqref{eq-14}. 

\smallskip\Boulette{$(1)\Rightarrow(2)$} Take a Markov \probmeas\  $P$ with derivative $Z$  with respect to $R:$ $dP=Z\, dR$. 
We denote by
$$
Z_t:=E_R(Z\mid\XXX 0t),\, 
Z^*_t:=E_R(Z\mid\XXX t1) \textrm{ and } \zeta_t(z):=E_R(Z\mid
X_t=z)=\frac{dP_t}{dR_t}(z).
$$ 
Remark that the last equality implies that $ \zeta_t(X_t)>0,\ P\pp$,    
\begin{equation}\label{eq-50}
   \zeta_t(X_t) = E_R(Z_t\mid X_t)=E_R(Z_t^*\mid
X_t), \quad R\pp
\end{equation}
and that  $ \zeta_t(X_t)$ is $R$-integrable.\\
Fix three bounded nonnegative functions  $f,g,h$ that are respectively $\AAA 0t$, $\AA_t$ and
$\AAA t1$ measurable. One obtains
\begin{eqnarray*}
  E_P(fgh)
  &\overset{\textrm{(i)}}=& E_P\left[E_P(f\mid X_t)\ g\ E_P(h\mid X_t)\right] \\
  &\overset{\textrm{(ii)}}=& E_P\left[
  \frac{E_R(fZ_t\mid X_t)}{E_R(Z_t\mid X_t)}
        \ g\ \frac{E_R(hZ^*_t\mid X_t)}{E_R(Z_t^*\mid X_t)}\right]\\
  &\overset{\textrm{(iii)}}=& E_P\left[g \frac{E_R(fhZ_tZ^*_t\mid X_t)}{\zeta_t(X_t)^2}\right] \\
  &\overset{\textrm{(iv)}}=& E_P[g E_{\widetilde{P}}(fh\mid X_t)]
\end{eqnarray*}
where we  successively used in (i):  the Markov property of  $P$,
in (iii): identity \eqref{eq-50} and the Markov property of $R$
and in (iv),
we introduce the \probmeas\ 
\begin{equation}\label{eq-54}
\widetilde{P}:=\1_{\{\zeta_t(X_t)>0\}}\frac{Z_tZ_t^*}{\zeta_t(X_t)}\,
R.
\end{equation}
From all these identities one deduces that
\begin{equation}\label{eq-51}
    P(\cdot\mid X_t)=\widetilde{P}(\cdot\mid X_t), \quad P\pp
\end{equation}
Define
$$
\left\{
\begin{array}{lcl}
  \alpha_t&=&\1_{\{\zeta_t(X_t)>0\}}\,Z_t/\zeta_t(X_t) \\
  \beta_t&=& Z_t^*.
\end{array}\right.
$$
Therefore   \eqref{eq-54} becomes
\begin{equation}\label{eq-55}
    \widetilde{P}=\alpha_t\beta_t\,R 
\end{equation}
and
$$
  E_R(\alpha_t\mid X_t)=\1_{\{\zeta_t(X_t)>0\}} \textrm{ and } E_R(\beta_t\mid X_t)=\zeta_t(X_t) .
$$
In order to identify  $P$ with $\widetilde{P}$, since (\ref{eq-51}) is satisfied, it is enough to show that their marginals at time $t$ are the same. Let us prove it. 
\begin{eqnarray*}
  \widetilde{P}_t(dz)
  &=& E_R\left(\alpha_t\beta_t\mid X_t=z\right)\,R_t(dz)  \\
  &\overset{(i)}=& E_R\left(\alpha_t\mid X_t=z\right)E_R\left(\beta_t\mid X_t=z\right)\,R_t(dz)  \\
  &=& \zeta_t(z) \,R_t(dz) =  P_t(dz)
\end{eqnarray*}
where the Markov property of $R$ is used at ({\it i}). This fact, together with 
 \eqref{eq-51}, implies the equality $P=\widetilde{P}.$ Eventually, since 
 $Z_t$ is $\AAA 0t$-measurable and $Z_t^*$ is $\AAA t1$-measurable, 
$\alpha_t$ and $\beta_t$ are respectively $\AAA 0t$  and  $\AAA t1$-measurable functions.
\end{proof}

\begin{example}\label{ex-2.16}
In the extreme case where   $\alpha_t$ is  $\AA_0$-measurable (that is $\alpha_t=f_0(X_0) $) and $\beta_t$  is $\AA_1$-measurable 
(that is $\beta_t=g_1(X_1)$), 
one obtains from the above theorem that any \probmeas\  $P$ of the form 
\begin{equation}\label{eq-fg}
P = f_0(X_0)g_1(X_1) \, R
\end{equation}
is Markov. This was remarked in \cite[Thm.\,4.1]{DY78} (Eq.\,(1) applied to $a=0$ and $b=1$).
\end{example}
In Theorem \ref{res-20} we will see that, under some restrictions on $R$, the \probmeas s of the form \eqref{eq-fg} 
are the only ones which are Markov in the class of all  \probmeas s of the form $P = h(X_0,X_1) \, R$.

\subsection{A fundamental example: bridges of a Markov measure} \label{subsection1.3} 

Since we are interested in a time-symmetric description of path measures, 
it is reasonable to disintegrate them along their endpoint (initial and final) values. Any probability measure 
$P$ on $ \Omega $ is a mixture of  \probmeas s pinned at both times $t=0$ and  $t=1,$ i.e.\  a mixture of its own bridges:
\begin{equation}\label{eq-34}
    P=\IXX P(\cdot\mid X_0=x,X_1=y)\,\bas P01(dxdy).
\end{equation}
Since $\XX^2$ is Polish,  this disintegration is meaningful. 
Note however that the bridge  $P( \cdot \mid X_0,X_1)$ is a priori only defined $P$-a.s. 

To simplify the presentation of our results, we will consider path measures $P$ whose 
 bridges can be constructed for \emph{all}  $x,y \in \XX$ as a regular version of the family of conditional laws 
($P(\cdot\mid X_0=x,X_1=y), x,y \in \XX$) and denote them by   $(\pont Pxy)_{ x,y \in \XX}$. 
See for instance \cite[Ch.\,6]{Kal81} for a precise definition of a regular 
conditional distribution. \\
Remark that it is not easy to construct such an everywhere-defined version in a general non-Markov setting 
but this is done in several relevant situations: When
$P$ is a Lévy process -- see \cite{Kal81} and \cite[Prop.\,3.1]{PZ04},
  a right process -- see \cite{FPY92}, or a Feller process -- see  \cite{ChU11}.\\
We recall the important property that  pinning preserves the Markov property.
\begin{prop}\label{res-11bis} 
Let $P$ be a Markov \probmeas\   whose bridges are  defined everywhere.
Then, for any $x,y \in \XX$, the bridge $\pont Pxy$ is also Markov. 
\end{prop}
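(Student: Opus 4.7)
The plan is to establish the Markov property of $\pont Pxy$ by reducing it to a ``three-point'' conditional independence statement for $P$ and then specializing at the prescribed endpoints $(x,y)$.

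Fix $t\in(0,1)$, $A\in\AAA 0t$ and $B\in\AAA t1$. The key intermediate identity is
\[
P(A\cap B\mid X_0,X_t,X_1) \;=\; P(A\mid X_0,X_t)\cdot P(B\mid X_t,X_1),\qquad P\pp
\]
To prove it, I would test both sides against an arbitrary product $\phi_0(X_0)\phi_t(X_t)\phi_1(X_1)$ (a standard monotone class argument then extends to general bounded test functions). Observing that $\phi_0(X_0)\1_A\in\AAA 0t$ and $\phi_1(X_1)\1_B\in\AAA t1$, a first application of Theorem \ref{res-02}\,(1) at time $t$ splits the expectation on the left as
\[
E_P\bigl[\phi_t(X_t)\,E_P[\phi_0(X_0)\1_A\mid X_t]\,E_P[\phi_1(X_1)\1_B\mid X_t]\bigr].
\]
The tower property lets me pull $\phi_0(X_0)$ out, rewriting the first inner conditional expectation as $E_P[\phi_0(X_0)\,P(A\mid X_0,X_t)\mid X_t]$, and similarly for the second factor. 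A second use of the Markov property of $P$ -- in the form that $X_0$ and $X_1$ are conditionally independent given $X_t$ -- then recombines the two inner conditional expectations into a single one. A final application of the tower property returns the desired integral of the right-hand side against $\phi_0(X_0)\phi_t(X_t)\phi_1(X_1)$.

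Once the three-point identity is available, the conclusion is obtained by specialization. For each $(x,y)\in\XX^2$ the everywhere-defined bridge $\pont Pxy$ is a regular version of $P(\cdot\mid X_0=x,X_1=y)$, under which $X_0=x$ and $X_1=y$ $\pont Pxy$\ps\ Conditioning under $\pont Pxy$ with respect to $X_t$ therefore amounts to conditioning under $P$ with respect to $(X_0,X_t,X_1)=(x,X_t,y)$, and substituting these fixed values into the identity yields exactly
\[
\pont Pxy(A\cap B\mid X_t) \;=\; \pont Pxy(A\mid X_t)\cdot\pont Pxy(B\mid X_t),
\]
i.e.\ the Markov characterization \eqref{eq-01}. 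The boundary cases $t\in\{0,1\}$ are immediate since one side of \eqref{eq-01} then belongs to the conditioning $\sigma$-algebra.

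The main delicate point is passing from the three-point identity, which holds $P$-almost surely and hence only for $P_{01}$-almost every $(x,y)$, to a statement valid for \emph{every} $(x,y)\in\XX^2$. Handling this is precisely the role of the hypothesis that the bridges are defined everywhere: the chosen regular version of the family $(\pont Pxy)_{x,y\in\XX}$ must be compatible with the Markov kernels of $P$, so that the a.e.\ identity transports coherently to each individual bridge.
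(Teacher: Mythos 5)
Your proof is correct and follows essentially the same route as the paper: both establish a three-point conditional independence identity $P(A\cap B\mid X_0,X_t,X_1)=P(A\mid X_0,X_t)\,P(B\mid X_t,X_1)$ (the paper writes the second factor as $P(B\mid X_0,X_t,X_1)$ and then invokes Theorem \ref{res-02}\,(2*), which is equivalent) and then read it off as \eqref{eq-01} for the bridges. Your closing remark about the passage from a $P_{01}$-a.e.\ statement to \emph{every} $(x,y)$ is in fact more explicit than the paper, which silently identifies the everywhere-defined regular version with the a.e.-defined conditional law.
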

\begin{proof}
Let $P$ be a Markov \probmeas,  $t$ be  a time in  $[0,1]$,  $A\in\AAA 0t$ and $ B\in\AAA t1$ be two events. 
We have
\begin{equation} \label{eq-pont}
 P(A\cap B\mid X_0,X_t,X_1)=P(A\mid X_0,X_t)P(B\mid X_0,X_t,X_1),\quad P\pp 
\end{equation}
Indeed,
\begin{eqnarray*}
  P(A\cap B \mid X_0,X_t,X_1)
  &=& E [P(A\cap B \mid X_0, \XXX t1)\mid X_0,X_t,X_1] \\
  &=& E[\1_B P(A\mid X_0, \XXX t1) \mid X_0,X_t,X_1] \\
  &=&  E[\1_B P(A\mid X_0, X_t) \mid X_0,X_t,X_1] \\
  &=& P(A\mid X_0, X_t)P(B \mid X_0,X_t,X_1).
\end{eqnarray*}
Moreover, by  Theorem \ref{res-02}-(2*),  $P(A\mid X_0,X_t)= P(A\mid X_0,X_t, X_1)$. Therefore 
$$
P^{X_0,X_1}(A\cap B\mid X_t)=P^{X_0,X_1}(A\mid
X_t)P^{X_0,X_1}(B\mid X_t), \quad P\pp 
$$ 
which characterizes the Markov property of 
 every bridge  $\pont Pxy$ via \eqref{eq-01}.
\end{proof}

In the rest of the section, we will work in the following framework. 

\noindent {\bf Assumptions (A).}\label{AA}\ 
There exists a reference Markov \probmeas\  $R$ satisfying the following requirements.\\
$(\textrm{A}_1)\quad R$ admits a family of bridges which can be defined everywhere\\
$(\textrm{A}_2) \quad$The transition probability kernels of $R$ admit a density, denoted by $r$, with respect to 
some $\sigma$-finite positive measure $m$ on $\XX$:
For all  $ 0\le
s<t\le1,$
\begin{eqnarray*}
r(s,x;t,y)&:=& \frac{dr(s,x;t,\cdot)}{dm}(y) \textrm{ for } R_s\otimes m\textrm{-a.e. }(x,y) \\
\textrm{and}\quad
 r^*(s,x;t,y)&:=& \frac{dr^*(s,\cdot;t,y)}{dm}(x)  \textrm{ for } m\otimes R_t\textrm{-a.e. }(x,y) .
\end{eqnarray*}
Therefore
$
R_0(dx)=\int r^*(0,x;1,y) R_1(dy) \, m(dx) =: r_0(x)\, m(dx)$ 
and similarly
$
R_1(dy) =: r_1(y)\, m(dy) .
$
This leads to 
$$
\bas
R01(dxdy)= r_0(x)\, m(dx) r(0,x;1,y)m(dy) =r_1(y)m(dy) r^*(0,x;1,y) m(dx),
$$
in such a way  that the function $c$ defined for almost every $x$ and $y$ by
\begin{equation} \label{eq:densityjointmarg}
 c(x,y):=r_0(x)r(0,x;1,y)=r_1(y) r^*(0,x;1,y)
\end{equation}
is the  density of the joint marginal $\bas R01(dxdy) $  with respect to  $m \otimes m $.\\

Remark that  $(\textrm{A}_2)$ is not always satisfied.  For instance, let us consider a Poisson process $R$ with a random  initial law that admits a density on  $\R$.
At any time $s$, its marginal law also admits a density. But the support of the measure $r(s,x;t,dy)$ is discrete and equal to $x + \N$. Therefore there does not
exist any measure $m$ such that for a.e. $x$, $r(s,x;t,dy)\ll  m (dy)$.
We will see at Example \ref{ex-ponts}(ii) how to circumvent this obstacle.\\

Let us recall the general structural relation between the path measure $R$ and its bridges. 
In general, the bridges are not globally absolutely continuous with respect to $R$, but they are locally
absolutely continuous with respect to $R$ in restriction to time interval $[s,t] \subset (0,1)$ away from the terminal times 0 and 1.

\begin{thm} Consider a Markov \probmeas\  $R$ 
satisfying  Assumptions {\em (A)}.
For all  $0<s\le t<1$ and all $x,y \in \XX$,
the bridge  $(\pont Rxy)_{[s,t]}$
of $R$  restricted to  $\AAA st$ is dominated by $R_{[s,t]}$. Its density is given by
\begin{equation}\label{eq-24}
    (\pont Rxy)_{[s,t]}=\frac{r^*(0,x;s,X_s)\, 
    r(t,X_t;1,y)}{c(x,y)} \, R_{[s,t]} \, ,
\end{equation}
where the function $c$ is defined by \eqref{eq:densityjointmarg}.
\end{thm}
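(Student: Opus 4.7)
My strategy is to verify directly that the right-hand side of \eqref{eq-24} defines a probability measure on $\AAA st$ whose integration against bounded test functions matches what the disintegration \eqref{eq-34} demands of the bridge $(\pont Rxy)_{[s,t]}$. The two ingredients will be the Markov property of $R$ (used via the equivalent form (3) of Theorem \ref{res-02}) and the Chapman-Kolmogorov identities \eqref{eq-11f}-\eqref{eq-11b}, rewritten in terms of the densities $r$ and $r^*$.

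The first step is normalization: for $R_{01}$-almost every $(x,y)$, the candidate density
\begin{equation*}
D^{x,y} := \frac{r^*(0,x;s,X_s)\, r(t,X_t;1,y)}{c(x,y)}
\end{equation*}
satisfies $E_R[D^{x,y}] = 1$. Using the Markov property to reduce to the joint law of $(X_s, X_t)$ and writing $R_s(du) = r_s(u)\, m(du)$ gives
\begin{equation*}
E_R\bigl[r^*(0,x;s,X_s)\, r(t,X_t;1,y)\bigr] = \int r^*(0,x;s,u)\, r(s,u;t,v)\, r(t,v;1,y)\, r_s(u)\, m(du)\, m(dv).
\end{equation*}
The forward-backward consistency $r_s(u)\, r^*(0,x;s,u) = r_0(x)\, r(0,x;s,u)$ (which follows from Bayes' formula applied to $R_0(dx)\, p(0,x;s,du) = R_s(du)\, p^*(0,dx;s,u)$), combined with two successive applications of Chapman-Kolmogorov, collapses the integral to $r_0(x)\, r(0,x;1,y) = c(x,y)$.

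For the identification, I test against a bounded $\AAA st$-measurable $F$ and a bounded nonnegative measurable $\phi : \XX \times \XX \to \R$. By the disintegration \eqref{eq-34} of $R$,
\begin{equation*}
E_R\bigl[\phi(X_0,X_1)\, F\bigr] = \iint \phi(x,y)\, E_{\pont Rxy}[F]\, c(x,y)\, m(dx)\, m(dy).
\end{equation*}
On the other hand, the Markov property yields the mutual conditional independence of $\AAA 0s$, $\AAA st$ and $\AAA t1$ given $(X_s, X_t)$, so that
\begin{equation*}
E_R\bigl[\phi(X_0,X_1) \mid X_s, X_t\bigr] = \iint \phi(x,y)\, r^*(0,x;s,X_s)\, r(t,X_t;1,y)\, m(dx)\, m(dy).
\end{equation*}
Conditioning and Fubini then rewrite the left-hand side as $\iint \phi(x,y)\, E_R[D^{x,y}\, F]\, c(x,y)\, m(dx)\, m(dy)$. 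Since this matches the disintegration identity for every $\phi$, the integrands must agree $R_{01}$-almost everywhere, which is precisely \eqref{eq-24}.

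The main obstacle is the passage from this $R_{01}$-almost-everywhere identification to the pointwise statement "for all $x,y \in \XX$" in the conclusion. This is where Assumption $(\textrm{A}_1)$ is essential: it furnishes a version of $(\pont Rxy)$ defined for every $(x,y)$, and one must verify that the right-hand side of \eqref{eq-24}, with the densities $r$ and $r^*$ chosen consistently on $\{c>0\}$, also yields a genuine probability measure for every $(x,y)$ in this set. The essential uniqueness of regular conditional distributions then promotes the almost-everywhere equality to the stated pointwise one.
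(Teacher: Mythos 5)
Your proposal is correct and follows essentially the same route as the paper: both arguments test the candidate density against a function of the endpoints times an $\AAA st$-measurable observable, invoke the Markov property in the form of Theorem \ref{res-02}-(3) to factor the conditional law of $(X_0,X_1)$ given $\XXX st$ into the backward and forward kernels, and identify the result with the disintegration of $R$ into its bridges --- your general test function $\phi(X_0,X_1)$ versus the paper's product $f(X_0)g(X_1)$, and your Chapman--Kolmogorov normalization versus the paper's explicit check that $c(x,y)=0$ forces the numerator to vanish $\bas Rst$-a.e., are only cosmetic differences. One caveat: your closing sentence overstates what uniqueness of regular conditional distributions delivers, since it yields the identity only for $R_{01}$-almost every $(x,y)$ rather than for all $(x,y)$, but the paper's own proof establishes no more than that either.
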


\begin{proof}
Let us show that
\begin{equation}\label{eq-28}
c(x,y)=0 \Rightarrow r^*(0,x;s,z)r(t,z';1,y)=0,\quad \forall
(z,z'),\ \bas Rst\pp
\end{equation}
On the one hand,
$$
\bas R01 (dx dy)= c(x,y)  m(dx) m(dy)
$$
and on the other hand, following Proposition \ref{res-16},
\begin{eqnarray*}
\bas R01 (dx dy)&=& \IXX R_{0,s,t,1}(dx,dz,dz',dy)\\
&=& \IXX  r^*(0,dx;s,z)R_s(dz)  r(s,z;t,dz') r(t,z';1,dy) \\
&=& \IXX  r^*(0,x;s,z) r(s,z;t,z') r(t,z';1,y)R_s(dz)m(dz') \,  m(dx) m(dy).
\end{eqnarray*}
Then
$$
c(x,y) =  \IXX  r^*(0,x;s,z) r(s,z;t,z') r(t,z';1,y)R_s(dz)m(dz')
$$
and  (\ref{eq-28}) holds. This allows us not to bother about dividing by zero.

For $\bas Rst$-a.e. $(z,z'),$ 
the \probmeas\  $r^*(0,dx;s,z)r(t,z';1,dy)$ is dominated by $\bas R01(dxdy)$ and it satisfies 
    \begin{equation}\label{eq-27bis}
r^*(0,dx;s,z)r(t,z';1,dy)=\frac{r^*(0,x;s,z)r(t,z';1,y)}{c(x,y)}\,
\bas  R01(dxdy).
\end{equation}
Take two bounded measurable functions $f,g$ and an event 
$B\in\AAA st$.
Thus,
\begin{eqnarray*}
  &&E_R[f(X_0)\ \1_B \ g(X_1)]\\
  &=& E_R\left[ \1_B \ E_R (f(X_0)\mid\XXX st) \ E_R(g(X_1)\mid\XXX st)\right] \\
  &=& E_R\left[  \1_B \ E_R(f(X_0)\mid X_s) \ E_R(g(X_1)\mid X_t)\right]\\
  &=& E_R\left[ \1_B \IX f(x) \,r^*(0,dx;s,X_s) \ \IX
  g(y)\,r(t,X_t;1,dy)\right]\\
  &=& E_R\left[\1_B \IXX f(x)g(y)r^*(0,dx;s,X_s)r(t,X_t;1,dy)\right]\\
 &\overset{\checkmark}=&E_R\left[\1_B \IXX f(x)\frac{r^*(0,x;s,X_s)r(t,X_t;1,y)}{c(x,y)}g(y)\,\bas
  R01(dxdy)\right]\\
  &=&\IXX f(x)E_R\left[\1_B \frac{r^*(0,x;s,X_s)\,r(t,X_t;1,y)}{c(x,y)}\right]g(y)\,\bas
  R01(dxdy),
\end{eqnarray*}
where we used  \eqref{eq-27bis} at the marked equality. This proves  \eqref{eq-24}. 
\end{proof}

\begin{cor}[Decomposition of a bridge]
Introducing $f_s(z):=r^*(0,x;s,z)$ and $g_t(z')=:c(x,y)^{-1}r(t,z';1,y)$,  \eqref{eq-24} becomes
\begin{equation}\label{eq-24bis}
   (\pont Rxy)_{[s,t]}=f_s(X_s)\, 
    g_t(X_t) \, R_{[s,t]}.
\end{equation}
 In particular, at each time  $t \in (0,1)$, the one dimensional marginal of the bridge $\pont Rxy$ is dominated by the 
 marginal $R_t$ of the Markov \probmeas\  $R$. It satisfies
 $$
 \pont Rxy_{t}=f_t(X_t)\, g_t(X_t) \, R_{t}.
$$
\end{cor}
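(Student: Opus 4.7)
The plan is to recognize that this corollary is an immediate reformulation and specialization of the preceding theorem, so the ``proof'' reduces to unwinding notation and choosing $s=t$ in \eqref{eq-24}. For the first assertion I would simply substitute the definitions of $f_s$ and $g_t$ into the right-hand side of \eqref{eq-24}. Since $x$ and $y$ are fixed (and since, by \eqref{eq-28}, the density in \eqref{eq-24} vanishes wherever $c(x,y)=0$), the quantity $c(x,y)^{-1}$ is a harmless constant that can be absorbed entirely into $g_t$ rather than split across both factors. This gives
$$
f_s(X_s)\, g_t(X_t)=\frac{r^*(0,x;s,X_s)\, r(t,X_t;1,y)}{c(x,y)},
$$
and \eqref{eq-24bis} is then a direct rewriting of \eqref{eq-24}.

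For the second assertion, on the one-dimensional marginal, I would specialize \eqref{eq-24bis} to $s=t$. The hypothesis $0<s\le t<1$ of the preceding theorem admits equality, so this choice is legitimate for every $t\in(0,1)$. Under this specialization the left-hand side becomes the marginal $\pont Rxy_t$, the restriction $R_{[t,t]}$ collapses to $R_t$, and the density $f_t(X_t)\, g_t(X_t)$ is already $\AA_t$-measurable, so no further conditioning or disintegration is required. The formula $\pont Rxy_t=f_t(X_t)\, g_t(X_t)\, R_t$ follows immediately, and in particular $\pont Rxy_t\ll R_t$.

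There is essentially no technical obstacle here, since the corollary is a direct consequence of the theorem. The only point worth noting is the role of the restriction $t\in(0,1)$: at the boundary times $t=0$ or $t=1$ the marginal $\pont Rxy_t$ is the Dirac mass $\delta_x$ or $\delta_y$, which is generally not dominated by $R_0$ or $R_1$, so the absolute-continuity statement genuinely fails at the endpoints and cannot be extracted from \eqref{eq-24}.
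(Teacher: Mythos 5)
Your proposal is correct and matches the paper's (implicit) reasoning exactly: the corollary is stated without proof precisely because it is the substitution of the definitions of $f_s$ and $g_t$ into \eqref{eq-24}, followed by the legitimate specialization $s=t$, under which $R_{[t,t]}$ is the time-$t$ marginal. Your closing remark about the failure of absolute continuity at the boundary times $t=0,1$ is a correct and worthwhile observation that the paper leaves unstated.
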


One interprets  \eqref{eq-24bis} as a generalization of (\ref{eq-fg}) on the time interval $[s,t]$:
the density of the bridge decomposes into a product of functions of the process at boundary times  $s$ and $t$. 
This  ensures its Markov property.

Naturally, both forward and backward dynamics of the bridge are directly related to the dynamics of the reference
process with free boundary conditions. 
\begin{prop} Let  $R$ be a Markov \probmeas.
\begin{enumerate}
    \item[(1)] For any time $ 0<t<1$ and for any $(x,y),$ the bridge  $\pont Rxy$ of $R$,
  restricted to  $\AAA 0t$ is given by 
\begin{equation}\label{eq-21f}
    (\pont Rxy)_{[0,t]}=\frac{r(t,X_t;1,y)}{r(0,x;1,y)}\, R_{[0,t]}(\cdot\mid X_0=x) .
\end{equation}
    \item[(2)] Analogously, for any time  $0<s<1$ and for any 
$(x,y),$ the bridge $\pont Rxy$ of $R$ restricted to 
$\AAA s1$ is given by
\begin{equation}\label{eq-21b}
    (\pont Rxy)_{[s,1]}=\frac{r^*(0,x;s,X_s)}{r^*(0,x;1,y)} \,R_{[s,1]}(\cdot\mid X_1=y)  .
\end{equation}
     \item[(3)] The forward and backward transition probability kernels of  $\pont Rxy$
satisfy for all $0\le s< t\le1$ and  $\bas Rst$-a.e.\
$(z,z'),$
\begin{eqnarray*}
  \pont rxy(s,z;t,dz') &=& \1_{\{r(s,z;1,y)>0\}} \frac{r(s,z;t,z')r(t,z';1,y)}{r(s,z;1,y)}\,m(dz') \\
  \haut rxy_*(s,dz;t,z') &=&
  \1_{\{r^*(0,x;t,z')>0\}} \frac{r^*(0,x;s,z)r^*(s,z;t,z')}{r^*(0,x;t,z')}\,m(dz)
\end{eqnarray*}
with the conventions $r(1,z;1,y)=\1_{\{z=y\}}$ and
$r^*(0,x;0,z)=\1_{\{z=x\}}.$
\end{enumerate}
\end{prop}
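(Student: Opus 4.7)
The plan is to deduce (1) from a direct Bayes'-rule argument using the Markov property of $R$, to obtain (2) as the time-reversed counterpart of (1), and to read off the kernels in (3) by marginalization. The main obstacle throughout is the careful handling of divisions by zero, exactly as in the proof of \eqref{eq-24}: one must verify an analog of the implication \eqref{eq-28} guaranteeing that $\pont Rxy$ charges no configuration on which the denominators vanish, which explains the appearance of the indicators $\1_{\{r(s,z;1,y)>0\}}$ and $\1_{\{r^*(0,x;t,z')>0\}}$ in the final formulas.

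For (1), I fix $t\in(0,1)$ and $x,y\in\XX$, and work under the conditional law $R(\cdot\mid X_0=x)$, which is well defined thanks to $(\textrm{A}_1)$. By the Markov property of $R$ (Theorem \ref{res-02}-(2)), the conditional law of $X_1$ given $\AAA 0t$ coincides with $R(X_1\in\cdot\mid X_t)$ and therefore, by $(\textrm{A}_2)$, has density $r(t,X_t;1,\cdot)$ with respect to $m$; meanwhile the $X_1$-marginal under $R(\cdot\mid X_0=x)$ has density $r(0,x;1,\cdot)$. Applying Bayes' rule to the joint law of $(X_{[0,t]},X_1)$ under $R(\cdot\mid X_0=x)$ and dividing by the $X_1$-marginal then yields \eqref{eq-21f}, on the $\bas R01$-full set $\{r(0,x;1,y)>0\}$ and with the usual convention elsewhere. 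For (2), I apply (1) to the time-reversed \probmeas\ $R^*$, which is still Markov by Theorem \ref{res-02}-($1^*$) and whose forward density is $r^*$; since time-reversal maps $\pont Rxy$ to $\pont{R^*}yx$, this immediately produces \eqref{eq-21b}.

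For (3), the bridge $\pont Rxy$ is Markov by Proposition \ref{res-11bis}, so its kernels are determined by any two-time marginal. Applying (1) on $[0,t]$ and using the Markov decomposition of $R_{[0,t]}(\cdot\mid X_0=x)$, the joint law of $(X_s,X_t)$ for $0<s<t<1$ under $\pont Rxy$ has density $r(0,x;s,z)\,r(s,z;t,z')\,r(t,z';1,y)/r(0,x;1,y)$ against $m\otimes m$; a Chapman--Kolmogorov integration in $z'$ produces the $X_s$-marginal density $r(0,x;s,z)\,r(s,z;1,y)/r(0,x;1,y)$, and the quotient is precisely the forward kernel stated in (3). The symmetric computation based on (2) delivers the backward kernel, while the boundary cases $t=1$ and $s=0$ are absorbed by the stated conventions $r(1,z;1,y)=\1_{\{z=y\}}$ and $r^*(0,x;0,z)=\1_{\{z=x\}}$.
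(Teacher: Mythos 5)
Your proposal is correct and follows essentially the same route as the paper: the Bayes'-rule identification in (1) is exactly what the paper verifies by testing $\frac{r(t,X_t;1,y)}{r(0,x;1,y)}R_{[0,t]}(\cdot\mid X_0=x)$ against functions $f(X_1)\1_B$ and invoking $E_R(f(X_1)\mid \XXX 0t)=E_R(f(X_1)\mid X_t)$, and your derivation of (3) by marginalizing the two-time density is what the paper means by ``direct corollary of (1) and (2).'' The only cosmetic difference is that you obtain (2) by applying (1) to $R^*$ together with $(\pont Rxy)^*=(\pont{(R^*)}yx)$ rather than repeating the symmetric computation with the backward kernel, which is equally valid.
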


\begin{proof}
\boulette{(1)} Define  $
    \displaystyle{
    P^{\widetilde{xy}}:=\frac{ r(t,X_t;1,y)}{
    r(0,x;1,y)}R_{[0,t]}(\cdot\mid X_0=x) }
    $
and take a bounded nonnegative map $f$ and an event  $B\in\AAA 0t$. Then,
\begin{eqnarray*}
  E_{R}\left(P^{\widetilde{{xX_1}}}(B)f(X_1) \mid X_0=x \right)
 &=& \IX r(0,x;1,y) P^{\widetilde{xy}}(B)f(y)\,m(dy)\\
 &=& \IX E_{ R}[\1_B\, r(t,X_t;1,y)f(y)\mid X_0=x ]\,m(dy) \\
 &=& E_{ R} [\1_B\IX r(t,X_t;1,dy)f(y)m(dy) \mid X_0=x ]\\
  &=& E_R [\1_B\,E_R(f(X_1) \mid X_t)\mid X_0=x ]\\
  &=& E_R[\1_B\,E_R(f(X_1) \mid \XXX 0t)\mid X_0=x ] \\
  &=& E_R[\1_B\,f(X_1)\mid X_0=x ] \\
  &=& E_R[\pont Rx{X_1}(B)\,f(X_1)\mid X_0=x ]
\end{eqnarray*}
which proves  \eqref{eq-21f}. 

\Boulette{(2)} It is  analogous to  (1).

\Boulette{(3)} It is a direct corollary of  (1) and (2).
\end{proof}

\begin{examples}\label{ex-ponts}  Let us provide examples of several kinds of bridges.

\begin{itemize}
    \item[(i)]   The first example is standard. 
    Let $R=\bf W$ be a {\em Wiener \probmeas} on the set of real-valued continuous paths on  $[0,1]$,   with fixed initial 
    condition $x \in \R$.
   Assumption  $(\textrm{A}_1)$ is satisfied 
    since Brownian bridges can be constructed for 
    any $x,y \in \R$ (as Paul Lévy already proposed).
    Assumption $(\textrm{A}_2)$ is satisfied with  $m (dx) = dx$. 
   Then the forward and backward transition probability densities are given, for any $s\leq t, x,y \in \R $, by:
\begin{eqnarray*}
r(s,z;t,y)&=&\frac{1}{\sqrt{2 \pi (t-s)}} e^{-\frac{(y-z)^2}{2 (t-s)}}, \\
r^*(s,z;t,y)&=&\frac{r(0,x;s,z)r(s,z;t,y)}{r(0,x;t,y)  } .
\end{eqnarray*}
Therefore, due to (\ref{eq-21f}),  the Brownian bridge restricted to  $\AAA 0t$ satisfies 
\begin{equation*}
    ({\bf W}^{x,y})_{[0,t]}= \frac{1}{\sqrt{1-t}} e^{-\left(\frac{(y-X_t)^2}{2 (1-t)} - \frac{(y-x)^2}{2}\right)}\, {\bf W}_{[0,t]}
\end{equation*}
 \item[(ii)]  
  Let  $\Poi$ be the law of a  {\em Poisson process} with values in the set of  càdlàg  step 
  functions with positive unit jumps. Poisson bridges can be constructed for  all $x,y \in \R$ such that $y-x \in \N$.\\
Now suppose that $X_0$ under $\Poi$ is random, real-valued and admits a density: $\Poi_0(dx)=r_0(x) dx$ on  $\R$. 
As already remarked, such a process does not satisfy Assumption $(\textrm{A}_2)$. However  its dynamics is space- (and time-) homogeneous: 
$$
r(s,x;t,dy)= \delta_x *  r(0,0;t-s,dy)
$$
 and the transition kernel $r(0,0;u,dy)$ admits a  Poissonian density $r$ with respect to the counting measure $m$ on $\N$ : 
$$
r(0,0;u,dy) = r(u, y) \, m(dy) \quad \textrm{ where } \quad r(u,n) =  e^{-u} \, u^{n}/n! \, .
$$
 Therefore the proof of (\ref{eq-21f}) can be generalized to this case, since one  exhibits
 the density of the bridge, on the time interval   $[0,t]$, of the  Poisson process  
 between 0 and $n$ with respect to the standard Poisson process starting in 0. 
 Then, the density  on the time interval $[0,t]$ of the Poisson process  pinned at
 $x$ and $y$ with respect to the Poisson process starting from $x$ satisfies for  $\Poi_0$-a.e.\ $x$ and $y \in x+\N$,
\begin{eqnarray*}
    (\pont \Poi xy)_{[0,t]}&=& \frac{r(1-t,y-X_t)}{r(1,y-x)} \, \Poi_{[0,t]}(\cdot\mid X_0=x) \\
&=& e^t (1-t)^{y-X_t} \frac{(y-x)!}{(y-X_t)!}   \, \Poi_{[0,t]}(\cdot\mid X_0=x) .
\end{eqnarray*}
   
\item[(iii)]  
  Let  $\Ca$ be the law of a  {\em  Cauchy process} on $\Omega$. 
A regular version of Cauchy bridges can be constructed for  all $x,y \in \R$, see \cite{ChU11}.
  The forward transition density $r(s,x;t,y)$ is given, for each  $x,y \in \R $, by the Cauchy law with  parameter $t-s$ :
$$
r(s,x;t,y) = \frac{t-s}{\pi ((t-s)^2 + (y-x)^2)} 
$$
and for  $\Ca_0$-almost all  $x$,
\begin{equation*}
    (\pont \Ca xy)_{[0,t]}= (1-t) \, \frac{1 + (y-x)^2}{(1-t)^2 + (y-X_t)^2} \quad \Ca_{[0,t]}(\cdot\mid X_0=x) .
\end{equation*}
The computation of the density of the bridge on the time interval  $[s,1]$ follows the same schema, using the backward transition density and the initial value $\Ca_0$. 
One  could also consider the reversible situation, corresponding to $\Ca_0(dx)=dx$. 
This reversible measure cannot be normalized but the present techniques  remain valid  for $\sigma$-finite measures, see \cite{Leo12b}.
\item[(iv)] 
Several other examples of Lévy bridges can be found in \cite{PZ04}.
\end{itemize}
    \end{examples}

\section{Reciprocal \probmeas s} \label{sec-rpm}

We now enlarge our framework to the class of reciprocal \probmeas s. 
They are not necessarily Markov but they enjoy a more general time-symmetry which 
justifies their relevance in the study of quantum mechanical systems,
see \cite{Nel67,Na93,CZ08}. 
The dynamical properties of 
reciprocal diffusions were elaborated many years after their introduction by Bernstein, 
see \cite{Jam70,Z86} and Section \ref{JC} for further details.

In fact reciprocal processes can be viewed as  one-dimensional Markov random fields indexed by a compact time interval,
here $[0,1]$, see  Eq.\,\eqref{champMark}.
When the time parameter belongs to an unbounded interval, the issues 
are close to those that were encountered in the setting of Gibbs measures on the one-dimensional time space and quasi-invariant measures on the path space. They were extensively studied in the seventies in the context of Euclidean
quantum field theory, see \cite{CR75}, \cite{RY76}, \cite{DY78}  and references therein. 
A more recent result on existence and uniqueness of Gibbs measures for suitable potentials relative to Brownian motion was proved 
in \cite{OS99}. 

\subsection{Definition and basic properties}

Let us begin with the definition.
\begin{definition}[Reciprocal \probmeas]
A probability measure  $P$ on  $\Omega$ is called \emph{reciprocal} if for any times  $s\le u$ in  $[0,1]$ and for any events  $A\in\AAA 0s, B\in \AAA
    su, C \in \AAA u1$, see Figure \ref{fig-01},
    \begin{equation}\label{eq-rec}
    P(A\cap B\cap C \mid X_s, X_u)=P(A \cap C\mid X_s, X_u)P(B\mid X_s, X_u)\quad  P\pp 
    \end{equation}
    
\end{definition}

\begin{figure}
\includegraphics{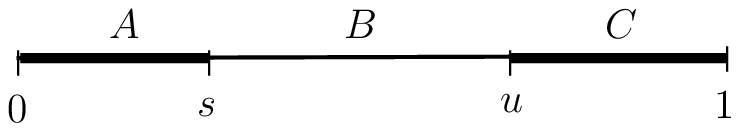}
\caption{}\label{fig-01}
\end{figure}

The above property, which was first formalized by Jamison in  \cite{Jam74}, states that under $P$, 
given the knowledge of the canonical process at  both times  $s$ and
$u$, the events ``inside'' $(s,u)$ and those ``outside'' $(s,u)$  are conditionally independent.
It is clearly  time-symmetric.
\\

Paralleling Theorem \ref{res-02}, we  present at Theorem \ref{rec-02} several characterizations of the reciprocal property. For the ease of the reader, we sketch its elementary proof.\\
Identity  (\ref{champMark}) states that a reciprocal \probmeas\  is indeed a Markov field indexed by  time, 
seen as a one-dimensional continuous parameter process. It means that conditioning an  event depending on the inside dat $X _{ [s,u]}$ by the knowledge of the outside data $X _{ [0,s]\cup[u,1]}$ amounts to simply conditioning it by the knowledge of the boundary data $(X_s,X_u).$ 

\begin{thm}\label{rec-02}Let $P$ be a probability measure  on $\Omega$.
Then the following assertions are equivalent:
\begin{itemize}
    \item[(1)] 
The \probmeas\ $P$ is reciprocal.
     \item[(1*)]  
The time-reversed \probmeas\   $P^*$ is reciprocal.
    \item[(2)]
For all  $0\le s\le u\le 1$ and all sets  $B\in\AAA su$,
\begin{equation}\label{champMark}
P(B\mid \XXX 0s, \XXX u1)=P(B\mid X_s,X_u) \quad  P\pp 
\end{equation}
\item[(3)]
For all  $0\le v \le r\le
s\le u\le1$ and all sets $A\in\AAA vr,$ $B\in\AAA su,$ see Figure \ref{fig-02},
\begin{equation*}\label{res-01}
    P(A\cap B\mid\XXX 0v, \XXX rs, \XXX u1)=P(A\mid X_v,X_r)P(B\mid
    X_s,X_u)\quad  P\pp  
\end{equation*}

\end{itemize}
\end{thm}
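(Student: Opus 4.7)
The plan is to establish the cycle $(1)\Rightarrow(2)\Rightarrow(3)\Rightarrow(1)$, in direct analogy with the proof of Theorem \ref{res-02}, and then deduce $(1)\Leftrightarrow(1^*)$ from the fact that the defining identity \eqref{eq-rec} is manifestly invariant under time reversal (swap the roles of $A\in\AAA 0s$ and $C\in\AAA u1$, and of $s$ and $u$, after reindexing $t\mapsto 1-t$).

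For $(1)\Rightarrow(2)$, I would fix $B\in\AAA su$ and show $P(B\mid\XXX 0s,\XXX u1)=P(B\mid X_s,X_u)$ by testing against the $\pi$-system of rectangles $\{A\cap C:A\in\AAA 0s,\,C\in\AAA u1\}$, which generates $\sigma(\XXX 0s,\XXX u1)$. For any such $A,C$, conditioning on $(X_s,X_u)$ and using \eqref{eq-rec} gives
\begin{equation*}
E[\1_{A}\1_{B}\1_{C}]=E\big[P(A\cap C\mid X_s,X_u)P(B\mid X_s,X_u)\big]=E[\1_{A}\1_{C}P(B\mid X_s,X_u)],
\end{equation*}
and a standard monotone-class extension yields (2).

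For $(2)\Rightarrow(3)$, fix $0\le v\le r\le s\le u\le 1$, $A\in\AAA vr$ and $B\in\AAA su$. Since $\sigma(\XXX 0v,\XXX rs,\XXX u1)\subset\sigma(\XXX 0s,\XXX u1)$ and $\1_A$ is $\AAA 0s$-measurable, the tower property plus (2) give
\begin{equation*}
P(A\cap B\mid\XXX 0v,\XXX rs,\XXX u1)=P(B\mid X_s,X_u)\,P(A\mid\XXX 0v,\XXX rs,\XXX u1).
\end{equation*}
Then I would apply (2) a second time, but now with the interval $[v,r]$ in the role of $[s,u]$, to obtain $P(A\mid\XXX 0v,\XXX r1)=P(A\mid X_v,X_r)$; conditioning further on the smaller $\sigma$-algebra $\sigma(\XXX 0v,\XXX rs,\XXX u1)$ preserves this equality since $(X_v,X_r)$ is already measurable with respect to it. This produces (3).

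For $(3)\Rightarrow(1)$, I would first recover (2) by specializing $v=r=0$ and $A=\Omega$ in (3), using $\sigma(X_0,\XXX 0s,\XXX u1)=\sigma(\XXX 0s,\XXX u1)$. Then, given $A\in\AAA 0s$, $B\in\AAA su$, $C\in\AAA u1$, the tower property and (2) yield
\begin{equation*}
P(A\cap B\cap C\mid X_s,X_u)=E\big[\1_A\1_C P(B\mid\XXX 0s,\XXX u1)\mid X_s,X_u\big]=P(B\mid X_s,X_u)\,P(A\cap C\mid X_s,X_u),
\end{equation*}
which is \eqref{eq-rec}. I do not expect a genuine obstacle: the argument is a routine transposition of Theorem \ref{res-02}'s proof, and the only place to be slightly careful is the $\pi$-$\lambda$ extension in $(1)\Rightarrow(2)$ and the double application of (2) in $(2)\Rightarrow(3)$.
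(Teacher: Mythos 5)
Your proposal is correct and follows essentially the same route as the paper's proof: the same tower-property computations drive $(1)\Leftrightarrow(2)$ and the double application of $(2)$ yields $(3)$, while $(1)\Leftrightarrow(1^*)$ is read off from the symmetry of the defining identity. The only cosmetic differences are that you organize the implications as a cycle and phrase $(2)\Rightarrow(1)$ directly with conditional expectations rather than with auxiliary test functions $f(X_s)g(X_u)$ as the paper does; both are sound.
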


\begin{figure}
\includegraphics{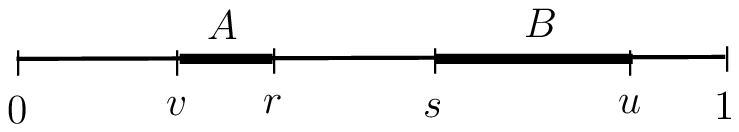}
\caption{}\label{fig-02}
\end{figure}

\begin{proof}
\Boulette{$(1)\Leftrightarrow (1^*)$}  Straightforward. 
\Boulette{$(1)\Rightarrow (2)$}
Let us take $ B \in\AAA su$. $P(B \mid \XXX 0s,\XXX u1) $ is the unique random variable  $\AAA 0s \vee \AAA u1$-measurable such that, 
for all $A\in\AAA 0s$ and $C \in\AAA u1$, 
$$
P(A\cap B\cap C ) = E[\1_A \1_C  P(B  \mid \XXX 0s,\XXX u1)].
$$
But, due to (\ref{eq-rec}), one has 
\begin{eqnarray*}
P(A\cap B\cap C ) = &=& E( P(A\cap B\cap C \mid X_s,X_u )) \\
  &=& E[ P(A\cap C \mid X_s,X_u ) P(B \mid X_s,X_u )]\\
  & =& E[E(\1_A \1_C P(B\mid X_s,X_u) \mid X_s,X_u)]\\
& =& E[\1_A \1_C P(B\mid X_s,X_u)].
  \end{eqnarray*}
This implies (2).
\Boulette{$(2)\Rightarrow (1)$}
Let us take  $0\le s\le u\le 1$, $A\in\AAA 0s, B \in\AAA su$, $C \in\AAA u1$ and  $f,g$ some measurable nonnegative functions. By definition, 
$$
  E[\1_A \1_B \1_C f(X_s) g(X_u)] = E [P(A\cap B\cap C \mid X_s,X_u)f(X_s) g(X_u) ]
$$
holds. But,
\begin{eqnarray*}
 E[\1_A \1_B \1_C  f(X_s) g(X_u)] &=&  E[E(\1_A \1_B \1_C  f(X_s) g(X_u)\mid \XXX 0s,\XXX u1) ] \\
  &=& E[\1_A \1_C  P(B  \mid X_s,X_u) f(X_s) g(X_u)]\\
  & =& E[P(A \cap C \mid X_s,X_u) P(B  \mid X_s,X_u) f(X_s) g(X_u)].
  \end{eqnarray*}
  Therefore
  $$
   P(A\cap B\cap C \mid X_s,X_u) =  P(A\cap C \mid X_s,X_u) P(B \mid X_s,X_u) .
  $$
 
\Boulette{$(2)\Rightarrow (3)$}
Take $A\in\AAA vr$ and $B\in\AAA su$. Then
\begin{eqnarray*}
 && P(A\cap B \mid\XXX 0v,\XXX rs,\XXX u1)\\
  &=& E\big[P(A\cap B \mid\XXX 0v,\XXX r1)\mid\XXX 0v,\XXX rs,\XXX u1\big] \\
   &\overset{\checkmark}{=}& E\big[P(A\mid X_v,X_r)\,\1_B\, \mid\XXX 0v,\XXX rs,\XXX u1\big] \\
  &=& E\Big[E\big(P(A\mid X_v,X_r)\,\1_B\, \mid \XXX 0s,\XXX u1\big)\mid\XXX 0v,\XXX rs,\XXX u1\Big] \\
   &\overset{\checkmark}{=}& E\big[P(A\mid X_v,X_r)P(B\mid X_s,X_u)\mid\XXX 0v,\XXX rs,\XXX u1\big] \\
  &=& P(A\mid X_v,X_r)P(B\mid X_s,X_u)
\end{eqnarray*}
where we used assumption (2) at the  $\checkmark$-marked equalities.
\Boulette{$(3)\Rightarrow (2)$}
It is enough to take  $A=\Omega$ and $v=t=s$.
\end{proof}

For a \probmeas, being Markov is stronger than being reciprocal. 
This  was noticed by Jamison  in  \cite{Jam70,Jam74}.

\begin{prop}\label{res-12}
Any Markov \probmeas\  is reciprocal, but the converse is false.
\end{prop}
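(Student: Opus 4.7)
The proof has two parts: the implication (Markov $\Rightarrow$ reciprocal) and the production of a counterexample to the converse. The first is a direct conditional-independence manipulation from Theorem \ref{res-02}; the second requires exhibiting an explicit mixture of bridges whose reciprocal property survives but whose Markov property does not.

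For the forward implication, I would start from Theorem \ref{res-02}(3): for any $0\le s\le u\le1$ and $A\in\AAA 0s$, $C\in\AAA u1$,
\[
 P(A\cap C\mid \XXX su)=P(A\mid X_s)P(C\mid X_u),\quad P\pp
\]
Given a third event $B\in\AAA su$, the indicator $\1_B$ is $\AAA su$-measurable, hence $P(A\cap B\cap C\mid\XXX su)=\1_B P(A\mid X_s)P(C\mid X_u)$. Conditioning further on the sub-$\sigma$-algebra $\sigma(X_s,X_u)\subset\AAA su$ and using the tower property gives
\[
 P(A\cap B\cap C\mid X_s,X_u)=P(B\mid X_s,X_u)P(A\mid X_s)P(C\mid X_u).
\]
Choosing $B=\Omega$ in the same identity yields $P(A\cap C\mid X_s,X_u)=P(A\mid X_s)P(C\mid X_u)$, so dividing the two identities recovers exactly the reciprocal relation \eqref{eq-rec}.

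For the counterexample I would take the Wiener measure $R=\mathbf{W}$ on $\Omega=C([0,1],\R)$ and set
\[
 P:=\tfrac{1}{2}\mathbf{W}^{0,0}+\tfrac{1}{2}\mathbf{W}^{1,1},
\]
the equally weighted mixture of two Brownian bridges. To see that $P$ is reciprocal I would verify the characterization \eqref{champMark}: for $0<s<u<1$ and $B\in\AAA su$, the Markov property of each bridge (Proposition \ref{res-11bis}) gives $\mathbf{W}^{i,i}(B\mid\XXX 0s,\XXX u1)=\mathbf{W}^{i,i}(B\mid X_s,X_u)$, and a direct computation using \eqref{eq-24} shows that the resulting regular conditional law coincides with the $\mathbf{W}$-bridge from $X_s$ to $X_u$ on $[s,u]$, independent of the component index $i$; averaging over $i$ therefore preserves this property. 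For the failure of the Markov property, I would observe that under $P$, knowing $X_0$ identifies the component almost surely (since $X_0\in\{0,1\}$ is deterministic in each), hence fixes $X_1$, so that $P(X_1=0\mid X_0=0,X_{1/2})=1$; yet observing only $X_{1/2}$ leaves a nondegenerate posterior over the two components, so $P(X_1=0\mid X_{1/2})\in(0,1)$ with positive probability, contradicting condition (2) of Theorem \ref{res-02}.

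The main obstacle is the reciprocal verification of the mixture: one has to check that the "bridge-of-a-bridge" structure really does collapse onto $(X_s,X_u)$ without residual dependence on the mixing variable. I expect that this is cleanest via an intermediate conditioning, writing $P(\,\cdot\mid\XXX 0s,\XXX u1)=P(\,\cdot\mid X_0,X_s,X_u,X_1)$ (which holds by construction of $P$ as a mixture of $(X_0,X_1)$-indexed bridges), and then invoking the Markov property of each bridge at times $s$ and $u$ to erase the dependence on $(X_0,X_1)$. The Markov failure, by contrast, is essentially one line once the two components are chosen to have disjoint endpoint supports.
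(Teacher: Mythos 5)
Your proof is correct, and it is worth comparing the two halves separately. For the implication (Markov $\Rightarrow$ reciprocal), your route is genuinely different from, and slightly leaner than, the paper's: you verify the defining relation \eqref{eq-rec} directly by writing $P(A\cap B\cap C\mid X_s,X_u)$ as a tower over $\AAA su$, pulling $\1_B$ out, and applying Theorem \ref{res-02}(3) once; the special case $B=\Omega$ then identifies the product $P(A\mid X_s)P(C\mid X_u)$ with $P(A\cap C\mid X_s,X_u)$ and the conclusion follows by substitution. The paper instead verifies the equivalent characterization \eqref{champMark} of Theorem \ref{rec-02}(2), which costs two further applications of the Markov property (to replace $P(C\mid X_u)$ by $\1_C$ and $P(A\mid X_s)$ by $\1_A$ inside the expectation). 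Your version uses only \ref{res-02}(3) plus the tower property; the one cosmetic flaw is the phrase ``dividing the two identities'' --- no division is performed or justified (the factors may vanish), it is simply the substitution of $P(A\mid X_s)P(C\mid X_u)$ by $P(A\cap C\mid X_s,X_u)$. For the counterexample, your measure $\tfrac12\mathbf{W}^{0,0}+\tfrac12\mathbf{W}^{1,1}$ is exactly the paper's Example \ref{ex-loop} (with endpoints $0,1$ in place of $\pm1$), and your Markov-failure argument is the same one-liner. Where you work harder than necessary is the reciprocality of the mixture: the paper disposes of this by citing Proposition \ref{res-13} (any $\pi$-mixture of bridges of a reciprocal measure is reciprocal, here with $\pi=\tfrac12\delta_{(0,0)}+\tfrac12\delta_{(1,1)}$), whereas you re-derive the ``bridge-of-a-bridge'' collapse by hand; also note that your intermediate identity $P(\cdot\mid\XXX 0s,\XXX u1)=P(\cdot\mid X_0,X_s,X_u,X_1)$ is not a consequence of the mixture structure alone but of the reciprocal property of each component, which is what Proposition \ref{res-13} packages for you. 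The hand computation does go through (the explicit, everywhere-defined and strictly positive Gaussian density in \eqref{eq-24} removes the null-set issues), but invoking \ref{res-13} is the cleaner path.
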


\begin{proof}
Take $P$ a Markov \probmeas, $0\le s\le u\le 1$ and $A
\in\AAA 0s, B \in\AAA su$ and $C \in\AAA u1$. The following holds:
\begin{eqnarray*}
  P(A\cap B\cap C ) &=& E [P(A\cap B\cap C  \mid\XXX su)]\\
  &\overset{(i)}{=}& E[P(A \mid X_s)\1_B  P(C \mid X_u)] \\
  &=& E[P(A \mid X_s)P(B \mid X_s,X_u) P(C \mid X_u)]\\
  &\overset{(ii)}{=}& E[P(A \mid X_s)P(B \mid X_s,X_u) P(C \mid \XXX 0u)] \\
  &=& E[P(A \mid X_s)P(B \mid X_s,X_u) \1_C ] \\
  &\overset{(iii)}{=}& E[P(A \mid \XXX s1)P(B \mid X_s,X_u) \1_C ] \\
  &=& E[\1_A  P(B \mid X_s,X_u) \1_C ]
\end{eqnarray*}
Equality ({\it i}) is due to Theorem
\ref{res-02}(3).
Use the Markov property to prove ({\it ii}) and ({\it iii}). 
Therefore (\ref{champMark})  holds.\\
Examples \ref{ex-loop} and  \ref{exp-01}-(ii) below  provide examples of reciprocal measures which are not Markov.
Other counterexamples will be given in Section \ref{sec:MarkReciprocal}, where we point out 
 reciprocal processes whose endpoint marginal laws do not have the required structure which characterizes the Markov property.
\end{proof}

\begin{example}[Reciprocal measures on a loop space]\label{ex-loop}

Let us mention the following class of reciprocal -- but not Markov -- \probmeas s. 
Take a Markov \probmeas\  $R$  whose bridges are defined everywhere and $m$ any 
probability measure  on $\XX$.
Then
$$\Ploop:= \int_\XX \pont Rxx \, m(dx)$$
is a \probmeas\  that is concentrated on  loops,  i.e.\ paths such that $X_0=X_1$ almost surely, 
with both  initial and final marginal laws equal to  $m$. One can see this path measure as describing a periodic 
random process.
Due to Proposition  \ref{res-13}, $\Ploop$ is reciprocal with the mixing \probmeas\  
$\pi(dxdy) = m(dx) \delta_x(dy)$. 
\\
To see that the Markov property breaks down in general,  take $R ^{ xx}$ to be the usual Brownian bridge 
on $\R$ between $x$ and $x,$ choose $m=(\delta _{ -1}+ \delta _{ +1})/2$ and pick  any intermediate time $0<t<1.$ 
We have $\Ploop(X_1\ge 0\mid X _{
[0,t]})=\mathbf{1}_{ \left\{ X_0=+1\right\} }$, while 
\begin{eqnarray*}
\Ploop(X_1\ge 0\mid X_t)
&=&\Ploop(X_0=+1\mid X_t).
\end{eqnarray*}
In particular when $t=1/2,$ symmetry considerations lead us to $\Ploop(X_0=+1\mid X _{ 1/2})=1/2,$ 
implying that $\Ploop(X_1\ge 0\mid X _{ 1/2})=1/2\not= \textbf{1} _{ \left\{X_0=+1\right\} }=\Ploop(X_1\ge 0\mid X _{ [0,t]}).$ 

\end{example}

We will describe in a short while the typical structure of reciprocal \probmeas s.

\subsection{Pinning leads back to the Markov property} 

Proposition \ref{res-11bis}  states the stability of the Markov property by pinning. 
Similarly, Jamison pointed out in \cite[Lemma\,1.4 ]{Jam74} the remarkable 
property that pinning a reciprocal \probmeas, not only preserves its reciprocal property,
but also transforms it into a Markov one. 

\begin{prop}\label{res-11} \
Let $P$ be a reciprocal \probmeas. 
If either $X_0$ or $X_1$ is a.s. constant, then $P$ is a Markov \probmeas.
In particular any bridge $ P(\cdot \mid X_0, X_1) $, defined  $P_{01}$-almost surely, is a Markov \probmeas.
\end{prop}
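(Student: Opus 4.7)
The plan is to read off the Markov characterization \eqref{eq-01} directly from the reciprocal identity \eqref{eq-rec} by specializing one of the two conditioning times to a deterministic endpoint; the bridge statement will then follow by first verifying that pinning preserves the reciprocal property and then invoking the constant-endpoint half of the proposition for each fixed bridge.

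Suppose first that $X_0$ is $P$-a.s.\ equal to some $x_0 \in \XX$. Fix $t \in [0,1]$ and events $B \in \AAA 0t$, $C \in \AAA t1$. I would apply \eqref{eq-rec} with $s = 0$, $u = t$ and $A = \Omega$, which yields
\begin{equation*}
P(B \cap C \mid X_0, X_t) = P(B \mid X_0, X_t)\,P(C \mid X_0, X_t), \quad P\pp
\end{equation*}
Since $X_0 = x_0$ $P$-a.s., the $\sigma$-algebra $\sigma(X_0)$ is $P$-trivial, so conditioning on $(X_0, X_t)$ coincides $P$-a.s.\ with conditioning on $X_t$ alone. The displayed identity thus reduces to \eqref{eq-01} at time $t$; as $t \in [0,1]$ is arbitrary, Definition~\ref{def-PMark} shows that $P$ is Markov. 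The case where $X_1$ is $P$-a.s.\ constant is entirely symmetric, obtained by taking instead $s = t$, $u = 1$ and $C = \Omega$ in \eqref{eq-rec}.

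For the second assertion, the preliminary step is to check that every bridge $\pont Pxy$ is itself reciprocal. Using characterization~(2) of Theorem~\ref{rec-02}, for $0 \le s \le u \le 1$ and $B \in \AAA su$ one has
\begin{equation*}
\pont Pxy (B \mid \XXX 0s, \XXX u1) = P(B \mid \XXX 0s, \XXX u1) = P(B \mid X_s, X_u) = \pont Pxy (B \mid X_s, X_u),
\end{equation*}
where the central equality is the reciprocal property of $P$ and the outer ones rest on the inclusions $\sigma(X_0) \subset \AAA 0s$ and $\sigma(X_1) \subset \AAA u1$, which make the additional $\pont Pxy$-almost sure information $\{X_0 = x,\, X_1 = y\}$ inconsequential. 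Since $X_0$ is constant under $\pont Pxy$, applying the first part of the proof to $\pont Pxy$ delivers the Markov property of the bridge.

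The argument is essentially a matter of matching quantifiers, and no serious technical obstacle appears. The only mildly delicate point is the transfer of conditional probabilities from $P$ to $\pont Pxy$ in the bridge step, which is justified precisely by the inclusions $\sigma(X_0) \subset \AAA 0s$ and $\sigma(X_1) \subset \AAA u1$ noted above.
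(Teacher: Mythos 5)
Your proof is correct and rests on the same key observation as the paper's: when an endpoint is a.s.\ constant, its $\sigma$-algebra is $P$-trivial and may be dropped from any conditioning. The routes differ in detail, though. The paper verifies the one-sided characterization of Theorem \ref{res-02}(2) by inserting $X_1$ into the conditioning, applying the Markov-field form \eqref{champMark} of the reciprocal property, and then removing $X_1$ again, and it disposes of the $X_0$-constant case by time reversal; you instead specialize the defining identity \eqref{eq-rec} to the degenerate time choices $(s,u)=(0,t)$ and $(s,u)=(t,1)$ with $A=\Omega$ (resp.\ $C=\Omega$) and read off \eqref{eq-01} directly, which treats the two endpoints symmetrically and avoids time reversal altogether. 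You also make explicit a step the paper leaves tacit: that the bridges of a reciprocal \probmeas\ are themselves reciprocal. This is genuinely needed before the constant-endpoint case can be applied to $\pont Pxy$, so spelling it out is a gain. Two small caveats. First, your justification of the \emph{last} equality in the bridge chain is off: $\sigma(X_s,X_u)$ does not contain $\sigma(X_0,X_1)$, so the ``conditioning on a larger $\sigma$-algebra'' argument that legitimately handles the first equality does not apply there; the correct (and easy) reason is that the first two equalities already exhibit $\pont Pxy(B\mid \XXX 0s,\XXX u1)$ as equal, $\pont Pxy$-a.s., to a $\sigma(X_s,X_u)$-measurable function, whence \eqref{champMark} for the bridge follows by conditioning on $(X_s,X_u)$ under $\pont Pxy$. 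Second, each identity in that chain holds for $P_{01}$-a.e.\ $(x,y)$ with an exceptional set depending on $(s,u,B)$, so concluding that $P_{01}$-almost every bridge is reciprocal requires passing to a countable generating family of times and events; the paper glosses over the same point (already in Proposition \ref{res-11bis}), so this is a shared, standard omission rather than a defect specific to your argument.
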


\begin{proof}
Suppose $X_1$ is $P$-a.s. constant and take $0\leq s \leq u \leq 1$.
For any bounded measurable function $f$
\begin{eqnarray*}
 E_P (f(X_u) \mid X_{[0,s]}) &=& E_P (f(X_u) \mid X_{[0,s]}, X_1) = E_P (f(X_u) \mid X_s, X_1)\\
 &=& 
 E_P (f(X_u) \mid X_s)
\end{eqnarray*}
which characterises the Markov property of  $ P$ thanks Theorem \ref{res-02} (2). 
The case where $X_0$ is $P$-a.s. constant is solved by time reversal.
\end{proof}


\subsection{Mixing properly preserves the reciprocal property} 

To complement the previous subsection we analyse in which way  mixing (pinned) \probmeas s  perturbs their  reciprocal and/or Markov properties. \\
Mixing Markov \probmeas s  sometimes preserves the Markov property, but this is far from being the rule.
Similarly,   mixing reciprocal \probmeas s sometimes   results in a  reciprocal measure, but not always.
The following examples illustrate these assertions. Moreover, we construct in  (ii) an example of a reciprocal \probmeas\  which is not Markov.

\begin{examples}[Various mixtures of deterministic paths]\label{exp-01}  

Let $\XX=  \{ \textsf{a,b,c}\}$ be a  state space with three elements. 
We denote by $\delta_w$, $w \in \Omega$, the Dirac measure at the path $w.$ Any $\delta_w$  is Markov since the path  $w$ is
deterministic.

\begin{itemize}
    \item[(i)]  
One denotes by $\textsf{acb} \in \Omega$ the following path $w$:
$$
\textsf{acb}(t):= \1_{[0,1/3)}(t) \ \textsf{a}+ \1_{[1/3,2/3)}(t)\ \textsf{c} +\1_{[2/3,1]}(t)\ \textsf{b} .
$$ 
Similar notations are used for paths that only jump  at times 1/3 or 2/3.
\\
	The path measure
$$P = \frac{1}{4}(\delta_{\textsf{abc}}+\delta_{\textsf{aba}}+\delta_{\textsf{cba}}+\delta_{\textsf{cbc}})
$$ 
is the uniform  mixture of deterministic Markov paths and is Markov too.
 Indeed  $P_0 = \frac
    12(\delta_\textsf{a}+\delta_\textsf{c})$
 and the nontrivial transition \probmeas s which are given by 
$$
P(X_{1/3}=\textsf{b}\mid X_0 =  \textsf{a}) =  P(X_{1/3}=\textsf{b}\mid X_0 =  \textsf{c})=1
$$ 
and
\begin{eqnarray*}
P(X_{2/3}=\textsf{a}\mid X_{1/3} = \textsf{b}, X_0= \textsf{a})
&=&P(X_{2/3}=\textsf{a}\mid X_{1/3} = \textsf{b}, X_0= \textsf{c})\\&=& P(X_{2/3}=\textsf{c}\mid X_{1/3} = \textsf{b})=1/2
\end{eqnarray*}
entirely specify the dynamics of $P$.

    \item[(ii)] 
The path measure
$$
P=\frac{1}{2}(\delta_{\textsf{abc}}+\delta_{\textsf{cba}}),
$$
is reciprocal but not Markov. 
It is reciprocal since each boundary condition determines  the path. Nevertheless we observe that  $P$ is not Markov since 
$$
P(X_1=\textsf{a}\mid X_0=\textsf{a},X_{1/3}=\textsf{b})=0
$$
 while 
$$
P(X_1=\textsf{a}\mid X_{1/3}=\textsf{b})=1/2.
$$
    \item[(iii)] 
Now, we define paths with four states and three jumps at fixed times $1/4, 1/2$ et $3/4$,  such as 
$$
\textsf{abab}(t):= \1_{[0,1/4)}(t) \ \textsf{a}+ \1_{[1/4,1/2)}(t)\ \textsf{b} +\1_{[1/2,3/4)}(t)\ \textsf{a} + \1_{[3/4,1]}(t)\ \textsf{b}.
$$ 
    The path measure
    $P:=\frac{1}{2}(\delta_{\textsf{abab}}+\delta_{\textsf{cbcb}})$, 
    which is a mixture of reciprocal  paths (they are deterministic) is not  reciprocal anymore. Indeed 
$$
P(X_{2/3}=\textsf{a}\mid \XXX{0}{1/3},\XXX{4/5}{1})=
\mathbf{1}_{\{X_0=\textsf{a}\}}
$$
 while
$$
P(X_{2/3}=\textsf{a}\mid
X_{1/3},X_{4/5})=P(X_{2/3}=\textsf{a})=1/2.
$$
\end{itemize}
\end{examples}

\vspace{7mm}

To avoid the pathology (iii) let us now mix only measures 
 $\pont Rxy$ obtained as bridges of some given reciprocal measure $R$.

\begin{prop}\label{res-13}
Let $R$ be a reciprocal \probmeas\ such that  the mapping
$(x,y) \in \XtX \mapsto \pont Rxy$ is  defined everywhere and measurable. Then, for any probability measure   $\pi$ on $\XtX$,  the path measure  
\begin{equation*}
    P(\cdot)=\IXX \pont Rxy(\cdot)\,\pi(dxdy)
\end{equation*}
is reciprocal. Moreover, the bridges of  $P$ coincide with those of $R$,  $P$\pp 
\end{prop}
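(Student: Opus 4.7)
The argument splits into identifying the bridges of $P$ and then promoting the reciprocal identity for $R$ to one for $P$, exploiting the Markov structure of the bridges $\pont Rxy$ (Proposition~\ref{res-11}).

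\emph{Step 1 (Bridges of $P$).} Since each $\pont Rxy$ is concentrated on $\{X_0=x,\,X_1=y\}$, for any bounded measurable $\phi:\Omega\to\R$ and $\psi:\XtX\to\R$,
\[
E_P\bigl[\phi\,\psi(X_0,X_1)\bigr]=\IXX E_{\pont Rxy}[\phi]\,\psi(x,y)\,\pi(dxdy).
\]
Taking $\phi\equiv 1$ gives $\bas P01=\pi$, and the general identity shows that $(x,y)\mapsto\pont Rxy$ is a regular version of $P(\cdot\mid X_0,X_1)$. By uniqueness of the disintegration along $(X_0,X_1)$, $\pont Pxy=\pont Rxy$ for $\pi$-a.e.\ $(x,y)$, which is the second claim.

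\emph{Step 2 (Reciprocity via characterization~(2) of Theorem~\ref{rec-02}).} Fix $0\le s\le u\le 1$ and $B\in\AAA su$. Since $R$ is reciprocal, there exists a measurable $g_B:\XtX\to\R$ with $R(B\mid\XXX 0s,\XXX u1)=g_B(X_s,X_u)$, $R$-a.s. Disintegrating this identity along $(X_0,X_1)$ yields
\[
\pont Rxy(B\mid X_s,X_u)=g_B(X_s,X_u),\ \pont Rxy\text{-a.s.},\qquad\text{for }\pi\text{-a.e.\ }(x,y),
\]
relying on the standing hypothesis that the bridge family is everywhere defined and measurable, hence consistent with $R$ beyond the support of $\bas R01$. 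On the other hand, by Proposition~\ref{res-11}, each $\pont Rxy$ is Markov and therefore reciprocal by Proposition~\ref{res-12}; so $\pont Rxy(B\mid\XXX 0s,\XXX u1)=\pont Rxy(B\mid X_s,X_u)=g_B(X_s,X_u)$, $\pont Rxy$-a.s. Combining these with Step~1 and the tower property (noting $X_0,X_1$ are $\AAA 0s\vee\AAA u1$-measurable), one obtains for $P$-a.e.\ path
\[
P(B\mid\XXX 0s,\XXX u1)=R^{X_0,X_1}(B\mid\XXX 0s,\XXX u1)=g_B(X_s,X_u),
\]
which depends on $(X_s,X_u)$ only and therefore coincides with $P(B\mid X_s,X_u)$. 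This yields the reciprocal property of $P$.

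\emph{Main obstacle.} The delicate point is the disintegration in Step~2: matching $\pont Rxy(B\mid X_s,X_u)$ with the common function $g_B(X_s,X_u)$ for $\pi$-a.e.\ $(x,y)$, even when $\pi$ is singular with respect to $\bas R01$ (as in Example~\ref{ex-loop}, where $\pi(dxdy)=m(dx)\delta_x(dy)$ is concentrated on the diagonal). This consistency is absorbed into the standing hypothesis that $(x,y)\mapsto\pont Rxy$ is defined everywhere and measurably --- in practice via a canonical compatible extension of the $\bas R01$-a.s.\ conditional system, cf.\ formula~\eqref{eq-24}.
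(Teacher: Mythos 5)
Your proof is correct and follows essentially the same strategy as the paper's: both verify characterization (2) of Theorem \ref{rec-02} by passing to the mixture of bridges and using the reciprocal property of $R$ to identify the conditional probability of $B\in\AAA su$ given the outside data with the $\sigma(X_s,X_u)$-measurable quantity $R(B\mid X_s,X_u)$. The paper does this in a single direct computation of $E_P[\1_A\, P(B\mid \XXX 0s,\XXX u1)\,\1_C]$ against test events $A\in\AAA 0s$, $C\in\AAA u1$, which makes your detour through Propositions \ref{res-11} and \ref{res-12} (each bridge is Markov, hence reciprocal) unnecessary: the test-event identity already forces $P(B\mid\XXX 0s,\XXX u1)$ to coincide $P$-a.s.\ with the $\sigma(X_s,X_u)$-measurable function $R(B\mid X_s,X_u)$. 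On the other hand, you make explicit two points the paper leaves implicit: the identification $\bas P01=\pi$ and $\pont Pxy=\pont Rxy$ for $\pi$-a.e.\ $(x,y)$ (the ``moreover'' clause, which the paper's proof does not address at all), and the null-set issue---reciprocity of $R$ only yields $\pont Rxy(A\cap B\cap C)=E_{\pont Rxy}[\1_A\, R(B\mid X_s,X_u)\,\1_C]$ for $\bas R01$-a.e.\ $(x,y)$, whereas $\pi$ may be singular with respect to $\bas R01$ (as in Example \ref{ex-loop}). The paper uses precisely this identity at its marked equality without comment, so the delicate point you flag is shared by the published proof and is indeed absorbed into the standing hypothesis that the bridge system is defined everywhere in a compatible way.
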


\begin{proof}
Let us show (\ref{champMark}) under $P$.
Let $0\le s\le t\le 1,$ $A\in\AAA 0s, B\in \AAA su$ and
$C\in\AAA u1$. Then,
\begin{eqnarray*}
  E_P[\1_A P(B\mid\XXX 0s,\XXX u1)\1_C]
  &=& P(A\cap B\cap C) \\
  &=& \IXX \pont Rxy(A\cap B\cap C)\,\pi(dxdy) \\
  &\overset{\checkmark}=&  \IXX E_{\pont Rxy}[\1_A R(B\mid X_s,X_t)\1_C]\,\pi(dxdy)\\
  &=& E_P[\1_A R(B\mid X_s,X_t)\1_C]
\end{eqnarray*}
where the reciprocal property is used at the marked equality.
Thus $P(B\mid\XXX 0s,\XXX t1)$ only depends on  $(X_s,X_t)$ and 
$$
P(B\mid\XXX 0s,\XXX t1)=R(B\mid X_s,X_t), \, P \pp,
$$ 
which completes the proof.
\end{proof}

Let us observe that this result does not contradict Example \ref{exp-01}-(iii). Indeed,  $P$ was expressed as a mixture of its own bridges, 
but not as a mixture of bridges of a given reciprocal \probmeas. It happens that there does not exist  any reciprocal \probmeas\  $R$ such that 
$\delta_{ \textsf{abab}}=R^{\textsf{ab}}$ and $\delta_{\textsf{cbcb}}=R^{\textsf{cb}}.$

\subsection{Reciprocal class associated with a \probmeas} \label{sec:recClass}

The previous proposition allows to construct classes of reciprocal \probmeas s based on some reference 
reciprocal \probmeas\  by varying the way of mixing  bridges. Therefore, we now recall the important concept of reciprocal class
which appears (implicitly)  in  \cite[\S3]{Jam74}, associated with a Markov reference \probmeas\  $R$ satisfying Assumptions (A) at page \pageref{AA}.

\begin{definition}[Reciprocal class associated with $R$]\label{def-clasrec} 
Suppose that $R$ is a reciprocal \probmeas\ such that
$(x,y) \in \XtX \mapsto \pont Rxy$ is   defined everywhere and measurable. The set of  probability measures on  $\Omega$ defined by 
 \begin{equation}\label{eq-12}
\Rec (R) :=\left\{ P=\IXX \pont Rxy(\cdot)\,\pi(dxdy); \pi \textrm{  probability measure  on } \XtX\right\}
\end{equation}
 is called the reciprocal class associated with $R$.
\end{definition}


In the case of a {\it discrete} state space $\XX$, the  hypothesis on $R$  becomes unnecessary.
One  should only make sure that the support of the mixing measure $\pi$ is included in the support of  $\bas R01$, in such a way that (\ref{eq-12}) makes sense.

In the particular case where $R$ is a Brownian diffusion defined on the space of continuous paths, the class $\Rec (R)$ can be characterized by two functions of
the drift of $R$, called reciprocal invariants. This was conjectured by  Krener in \cite{Kre88} and proved by Clark in \cite[Theorem\,1]{Cl91}. 
See also \cite{Th93} and \cite{CZ91} for their role in a second order stochastic differential equation
satisfied by the reciprocal class. 
Thereafter, Thieullen and the second author derived  an integration by parts formula on the path space that is expressed in terms of the reciprocal invariants of the Brownian diffusion $R$ and that  fully characterises the associated reciprocal class.
See  \cite{RT02} for one-dimensional diffusion processes and  \cite{RT05} for the multidimensional case.\\
When $R$ is a counting process (i.e.\  $\XX=\N$),  Murr provides a description of a reciprocal invariant associated 
with $\Rec (R)$,  as well as a characterisation of the reciprocal class through a duality formula, see \cite{Murr12,CLMR14}. 
An extension of this work for compound Poisson processes is done in \cite{CDPR14}, and 
to more general processes on graphs in \cite{CL14}. \\
For a recent review on stochastic analysis methods to characterize reciprocal classes, see \cite{Roe13}.

\subsection{Time reversal and reciprocal classes} 

We already saw in Theorem \ref{rec-02} that a path \probmeas\  is reciprocal 
if and only if its time-reversed is reciprocal too.
We now precise what is the image of a reciprocal class by time reversal. We give the proof, even if it looks rather natural.

\begin{prop}\label{res-renvrec}
Let $R$ be a reciprocal \probmeas\  as in Definition \ref{def-clasrec}. Then
$$	
P\in \Rec (R) \iff  P^*\in \Rec (R^*).
$$	
\end{prop}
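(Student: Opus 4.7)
The strategy is to reduce both implications to a single one via the involutive nature of time reversal, and to obtain that one implication from the key identity
\begin{equation}\label{eq-key-tr}
(\pont Rxy)^*=\pont{(R^*)}{y}{x}, \qquad \forall x,y\in \XX,
\end{equation}
combined with a change of variables under the swap $\sigma:\XX^2\to\XX^2,\;(x,y)\mapsto(y,x).$ Since $(R^*)^*=R$ and $(P^*)^*=P,$ it is enough to prove the forward direction $P\in\Rec(R)\Rightarrow P^*\in\Rec(R^*)$; the reverse implication will then follow by applying the same argument to the reciprocal measure $R^*$ (which is indeed reciprocal, by Theorem \ref{rec-02}).

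To establish \eqref{eq-key-tr}, the plan is to start from the disintegration $R=\IXX \pont Rxy\,\bas R01(dxdy)$ and push it forward by the time-reversal map $X^*,$ which essentially exchanges $X_0$ and $X_1.$ This yields on one hand $R^*=\IXX(\pont Rxy)^*\,\bas R01(dxdy),$ and on the other hand $\bas{(R^*)}01=\sigma\pf \bas R01,$ so that the canonical disintegration of $R^*$ along $(X_0,X_1)$ can be rewritten as an integral against $\bas R01$ as well. Uniqueness of the disintegration then forces \eqref{eq-key-tr} to hold for $\bas R01$-almost every $(x,y).$ To promote it to an everywhere-defined identity, I would adopt $(\pont Rxy)^*$ as the canonical everywhere-defined measurable version of $\pont{(R^*)}{y}{x},$ which is legitimate since $(x,y)\mapsto\pont Rxy$ is given everywhere measurable by assumption and time reversal is a deterministic measurable operation on path space.

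With \eqref{eq-key-tr} at hand, given any $P=\IXX \pont Rxy\,\pi(dxdy)\in\Rec(R),$ pushing forward by $X^*$ and then applying \eqref{eq-key-tr} leads to
\[
P^*=\IXX (\pont Rxy)^*\,\pi(dxdy)=\IXX \pont{(R^*)}{y}{x}\,\pi(dxdy)=\IXX\pont{(R^*)}{u}{v}\,\tilde\pi(dudv),
\]
where $\tilde\pi:=\sigma\pf\pi$ is again a probability measure on $\XX^2.$ This exhibits $P^*$ as an element of $\Rec(R^*),$ completing the forward direction.

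The only subtle point I anticipate is the everywhere-defined character of \eqref{eq-key-tr}: the disintegration argument supplies it only $\bas R01$-almost everywhere, whereas the definition of $\Rec(R^*)$ ranges over all probability measures $\tilde\pi$ on $\XX^2$ and not only those dominated by $\bas{(R^*)}01.$ Fortunately, the assumption that the bridges of $R$ are defined everywhere, together with the deterministic nature of time reversal, provides the required everywhere-defined version at no extra cost.
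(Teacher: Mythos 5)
Your proposal is correct and follows essentially the same route as the paper: its proof also rests on the identity $(\pont Rxy)^*=\pont{(R^*)}{y}{x}$ (obtained there from a small auxiliary lemma on pushforwards of conditional laws, which amounts to your uniqueness-of-disintegration argument) followed by the change of variables $\tilde\pi=\sigma\pf\pi$ under the coordinate swap. If anything, you are more explicit than the paper about reducing to a single implication via the involutivity of time reversal and about promoting the $\bas R01$-a.e.\ identity to an everywhere-defined version of the bridges of $R^*$, a point the paper glosses over when it integrates against $P_{01}$, which need not be dominated by $\bas R01$.
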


We first prove the following auxiliary lemma.
\begin{lemma}\label{l213} Let $P$ be a probability measure on $\Omega$. \quad
\begin{enumerate}[(a)]
\item
Consider the diagram $\OO \overset{\Phi}{\to} \Phi(\OO)\overset{\theta}{\to} \YY$ where the mentioned sets and mappings are   measurable. Then, for any bounded measurable function $f:\Phi(\OO)\to\R$, we have
\begin{equation*}
E _{\Phi\pf P}(f| \theta)=\alpha(\theta)
\end{equation*}
 with  $\alpha(y):= E_P(f(\Phi)|\theta(\Phi)=y).$
\item
Consider the diagram $\YY\overset{\theta}{\leftarrow}\OO \overset{\Phi}{\rightarrow}\OO$ where the mentioned sets and mappings are   measurable. Suppose that $\Phi$ is one-to-one with measurable inverse $\Phi ^{-1}$.  Then,
\begin{equation*}
\Phi\pf\Big[P(\cdot\mid\theta=y)\Big]=\big[\Phi\pf P\big](\cdot\mid \theta\circ\Phi ^{-1}=y),\quad y\in \YY.
\end{equation*}
\end{enumerate}
\end{lemma}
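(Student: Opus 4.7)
The plan is to reduce both statements to the defining property of conditional expectation combined with the elementary change-of-variables formula $E_{\Phi\pf P}[h]=E_P[h\circ\Phi]$ for bounded measurable $h$.

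For part (a), I will verify that $\alpha(\theta)$ is a version of $E_{\Phi\pf P}(f\mid \theta)$ by checking the two defining properties. It is $\sigma(\theta)$-measurable by construction of $\alpha$ as a regular version of a conditional expectation, so only the integral identity remains: for every bounded measurable $g:\YY\to\R$,
\[
E_{\Phi\pf P}[f\cdot g(\theta)]=E_{\Phi\pf P}[\alpha(\theta)\cdot g(\theta)].
\]
Pushing both sides back to $(\OO,P)$ via $\Phi$, the left-hand side becomes $E_P[f(\Phi)\cdot g(\theta\circ\Phi)]$ and the right-hand side becomes $E_P[\alpha(\theta\circ\Phi)\cdot g(\theta\circ\Phi)]$. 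By definition $\alpha(\theta\circ\Phi)=E_P(f(\Phi)\mid \theta\circ\Phi)$, so the tower property yields equality.

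For part (b), I plan to identify the two candidate regular conditional distributions by pairing them against arbitrary bounded measurable test functions $\varphi:\OO\to\R$ and $g:\YY\to\R$ and integrating against $\theta\pf P$. The key observation is that $(\theta\circ\Phi^{-1})\pf(\Phi\pf P)=\theta\pf P$, so both candidate disintegrations live over the same base measure. Using the disintegration identity on each side together with the change-of-variables formula, both double integrals
\[
\int_{\YY} g(y)\Big(\int\varphi\, d\Phi\pf[P(\cdot\mid\theta=y)]\Big)\,\theta\pf P(dy)\quad\text{and}\quad \int_{\YY} g(y)\Big(\int\varphi\, d[\Phi\pf P](\cdot\mid\theta\circ\Phi^{-1}=y)\Big)\,\theta\pf P(dy)
\]
reduce to $E_P[g(\theta)\cdot\varphi(\Phi)]$. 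Since $g$ is arbitrary and both inner maps are measurable functions of $y$, they agree $\theta\pf P$-a.e., which is the claimed equality of measures at $\theta\pf P$-almost every $y$.

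The main obstacle I expect is the almost-sure subtlety: regular conditional distributions are unique only up to $\theta\pf P$-null sets, so the identity ``for $y\in\YY$'' in (b) has to be interpreted as equality of versions rather than as a pointwise statement. A related minor point is ensuring measurability of the two families $y\mapsto\Phi\pf[P(\cdot\mid\theta=y)]$ and $y\mapsto[\Phi\pf P](\cdot\mid\theta\circ\Phi^{-1}=y)$ so that Fubini and the disintegration can be invoked legitimately; this is built into the very definition of a regular conditional distribution. Part (a) could alternatively serve as a shortcut to (b) by applying it with the test function $\1_A\circ\Phi$, but the direct pairing argument sketched above is cleaner and more self-contained.
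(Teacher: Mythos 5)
Your proof of part (a) is correct and is essentially the paper's own argument: test against bounded measurable functions of $\theta$, transport both sides back to $(\OO,P)$ via the change-of-variables identity $E_{\Phi\pf P}[h]=E_P[h\circ\Phi]$, and invoke the defining property of $E_P(\,\cdot\mid\theta\circ\Phi)$. For part (b) you diverge from the paper in a small but real way. The paper proves (b) in three lines by writing $\theta=(\theta\circ\Phi^{-1})\circ\Phi$ and applying part (a) to the diagram $\OO\overset{\Phi}{\to}\OO\overset{\theta\circ\Phi^{-1}}{\to}\YY$ with $f=u$, which immediately identifies $E_{\Phi\pf P}(u\mid\theta\circ\Phi^{-1}=y)$ with $E_P(u(\Phi)\mid\theta=y)=E_{\Phi\pf[P(\cdot\mid\theta=y)]}(u)$; this is exactly the shortcut you mention and then decline. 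Your direct pairing argument is also correct: the observation $(\theta\circ\Phi^{-1})\pf(\Phi\pf P)=\theta\pf P$ puts both disintegrations over the same base measure, and both double integrals collapse to $E_P[g(\theta)\varphi(\Phi)]$. What the paper's route buys is brevity and the elimination of the Fubini/measurability bookkeeping, since (a) already packages the identification of conditional expectations; what your route buys is self-containedness and a cleaner view of why the statement is really an equality of disintegrations over a common base measure. One point to tighten if you keep your version: the null set in your conclusion a priori depends on $\varphi$, so to pass from ``for each $\varphi$, a.e.\ $y$'' to ``a.e.\ $y$, for all $\varphi$'' (i.e.\ equality of measures) you should run the argument over a countable measure-determining class of test functions, which is available since $\OO$ is Polish; you flag the a.e.\ subtlety but do not quite spell out this step.
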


\begin{proof}

 \boulette{(a)}
For any bounded measurable function $u:\YY\to\R$,
\begin{eqnarray*}
E _{\Phi\pf P}\Big[E _{\Phi\pf P}(f|\theta) u(\theta)\Big]
	&=& E _{\Phi\pf P}(f u(\theta))
	=E_P\Big[f(\Phi)u( \theta(\Phi))\Big]\\
	&=& E_P\Big[E_P(f(\Phi)|\theta(\Phi))\, u(\theta(\Phi))\Big]
	=E _{\Phi\pf P}(\alpha(\theta)\,u(\theta))
\end{eqnarray*}

\Boulette{(b)}
We add  a bounded measurable function $u$ to the diagram:\\
 $\YY\overset{\theta}{\leftarrow}\OO \overset{\Phi}{\rightarrow}\OO\overset{u}{\to}\R$ and compute, for $y\in\YY$,
\begin{eqnarray*}
E _{\Phi\pf P(\cdot\mid \theta=y)}(u)
	&=&E_P\big[u(\Phi)| \theta=y\big]\\&=&
	E_P\big[u(\Phi)|\theta\circ \Phi ^{-1}\circ\Phi=y\big]
	\overset{(i)}=E _{\Phi\pf P}(u| \theta\circ\Phi ^{-1}=y)
\end{eqnarray*}
where equality (i) is a  consequence of the above result (a).
\end{proof}

\begin{proof}[Proof of Proposition \ref{res-renvrec}]
In particular Lemma \ref{l213}-(b) implies that 
\begin{equation}\label{eq-27}
(R^{xy})^*=(R^*)^{yx},\quad \textrm{ for } R_{01}\pp \, x,y\in\XX.
\end{equation}

Let $P\in \Rec (R)$, then   $P(\cdot)=\IXX R ^{xy}(\cdot)\, P _{01}(dxdy).$ We now compute the integral of a function $u$ under  $ P^*$ :
\begin{eqnarray*}
E _{P^*}[u(X)]&=& E_P [ u(X^*)] =\IXX E _{(R^{xy})^*}(u)\, P _{01}(dxdy)\\
	&\overset{\eqref{eq-27}}{=}&\IXX E _{(R^*)^{yx}}(u)\, P _{01}(dxdy)=\IXX E _{(R^*)^{xy}}(u)\, (P^*)_{01}(dxdy).
\end{eqnarray*}
This means that   $ P^*(\cdot)=\IXX (R^*)^{xy}(\cdot)\,  (P^*)_{01}(dxdy)$, completing the proof of Proposition \ref{res-renvrec}.
\end{proof}

\subsection{Reciprocal subclass of dominated \probmeas s}\label{sec-FamRec} 

To make precise our structural analysis of reciprocal \probmeas s, 
we introduce a slightly smaller family of \probmeas s than the reciprocal class.
This subclass
only contains  \probmeas s which are dominated by the reference \probmeas\   $R$. 

\begin{definition}\label{def-02} 
Suppose that $R$ is a reciprocal \probmeas\   as in Definition \ref{def-clasrec}. 
We define the following set of probability measures on  $\Omega$:
 \begin{equation}\label{eq-12bis}
\Rec_{ac} (R) := \left\{ P: P=\IXX \pont Rxy\,\pi(dxdy);\  \pi\in\textrm{Proba}(\XX^2), \pi \ll \bas R01 \right\}\subset  \Rec (R) .
\end{equation}
\end{definition}

\begin{remarks}[about this definition] $\mbox{}$ 
\begin{enumerate}[(a)]
\item 
Due to Proposition \ref{res-13}, we notice that any element of $ \Rec_{ac} (R)$ is reciprocal.
\item 
We write $P\prec R$ when  $P$ disintegrates as in (\ref{eq-12bis}). 
Note that the  relation $\prec $ is transitive. But  it is not symmetric; this lack of symmetry arises when the marginal laws at time 0 and 1 are not equivalent in the sense of measure theory.
 Therefore  $\Rec_{ac} (R)$ is  not an equivalence class. If one wants to define a genuine equivalence relation
 $\sim$ between \probmeas s on $\OO$ one should assume that marginal laws at time 0 and 1 are equivalent. Then 
$P\sim R$ if and only if  $P\prec R$ and
$R\prec P$.
\end{enumerate}
\end{remarks}
 
As noticed in \cite{DY78} Proposition 3.5,  elements of $\Rec_{ac} (R)$ have a simple structure.

\begin{thm}\label{res-07} Each \probmeas\  $P$ in $\Rec_{ac} (R)$ is absolutely continuous with respect to  $R$  and satisfies
\begin{equation*}                                 
    P=\frac{d\pi}{d\bas R01}(X_0,X_1) \, R.
\end{equation*}
Conversely, if $P$ is a path measure defined by 
\begin{equation}\label{eq-PhR}
    P= h(X_0,X_1) \, R \,
\end{equation}
for some  nonnegative measurable function $h$ on $\XtX$, then $P\in \Rec_{ac} (R)$ and more precisely, $P$ is a  $\pi$-mixture of bridges of $R$ with 
$\pi (dx dy) := h(x,y) \, \bas R01 (dx dy)$.
\end{thm}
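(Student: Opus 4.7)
\medskip

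The plan is to prove the two implications by unpacking the mixture definition against the disintegration formula \eqref{eq-34} applied to $R$ itself, namely
\begin{equation*}
R(\cdot) = \IXX \pont Rxy(\cdot)\,\bas R01(dxdy),
\end{equation*}
which is available because the bridges $\pont Rxy$ are assumed defined everywhere and measurably in $(x,y)$.

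For the first implication, assume $P\in\Rec_{ac}(R)$, so there exists a probability measure $\pi$ on $\XtX$ with $\pi \ll \bas R01$, having density $h := d\pi/d\bas R01$, such that $P=\IXX \pont Rxy\,\pi(dxdy)$. I would pick an arbitrary bounded measurable function $F$ on $\Omega$ and compute
\begin{align*}
E_P(F) &= \IXX E _{ \pont Rxy }(F)\,\pi(dxdy)
      = \IXX h(x,y)\, E _{ \pont Rxy }(F)\,\bas R01(dxdy).
\end{align*}
The key observation is that under the bridge $\pont Rxy$ one has $X_0=x$ and $X_1=y$ almost surely, hence $h(X_0,X_1)=h(x,y)$, $\pont Rxy$-a.s., and therefore $h(x,y)E _{ \pont Rxy }(F)=E _{ \pont Rxy }(h(X_0,X_1)F)$. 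Plugging this back and using the disintegration of $R$ gives
\begin{equation*}
E_P(F)=\IXX E _{ \pont Rxy }\big(h(X_0,X_1)F\big)\,\bas R01(dxdy)=E_R\big(h(X_0,X_1)F\big),
\end{equation*}
which is exactly $P=h(X_0,X_1)\,R$, i.e. $P\ll R$ with the claimed density.

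For the converse, assume $P=h(X_0,X_1)\,R$ with $h\ge0$ measurable. Since $P$ is a probability measure, $\IXX h\,d\bas R01=1$, so $\pi(dxdy):=h(x,y)\,\bas R01(dxdy)$ is a probability measure on $\XtX$ with $\pi\ll\bas R01$. Running the same chain of equalities \emph{in reverse} — for any bounded measurable $F$,
\begin{equation*}
E_P(F)=E_R\big(h(X_0,X_1)F\big)=\IXX h(x,y)E _{ \pont Rxy }(F)\,\bas R01(dxdy)=\IXX E _{ \pont Rxy }(F)\,\pi(dxdy)
\end{equation*}
— shows $P=\IXX \pont Rxy\,\pi(dxdy)$, so $P\in\Rec_{ac}(R)$, with mixing measure $\pi$ as announced; one also notes that $\pi$ coincides with $\bas P01$ by taking $F$ to depend only on $(X_0,X_1)$.

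There is no genuine obstacle here: both directions reduce to the disintegration of $R$ along its endpoints combined with the almost-sure constancy of $(X_0,X_1)$ under a bridge. The only point deserving care is the justification of replacing $h(X_0,X_1)$ by $h(x,y)$ inside $E_{\pont Rxy}$, which is automatic once the bridges $\pont Rxy$ are understood as regular conditional distributions pinned at the endpoints, as set up in Subsection \ref{subsection1.3}.
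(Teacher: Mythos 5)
Your proof is correct and takes essentially the same approach as the paper: both directions reduce to the disintegration of $R$ along its endpoints $(X_0,X_1)$, with the key step being that a density depending only on the endpoints can be moved in and out of the bridge expectations. The paper phrases this step more tersely as the tower property for $E_R(\,\cdot\mid X_0,X_1)$, but the content is identical.
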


\begin{proof}
Let $P \in \Rec_{ac} (R)$ and $f$ any nonnegative bounded function. Due to Definition \eqref{eq-12bis}, 
\begin{eqnarray*}
  E_P(f)
  &=& \IXX E_R(f\mid  X_0=x,X_1=y) \frac{d\pi}{d\bas R01}(x,y)\, \bas R01(dxdy) \\
  &=& E_R\left(f \,\frac{d\pi}{d\bas R01}(X_0,X_1)\right),
\end{eqnarray*}
which proves the first assertion. 
For the second assertion, note that 
$$
 P(\cdot)=\IXX \pont Pxy(\cdot)\,\pi(dxdy) = \IXX h(x,y) \pont Rxy(\cdot)\,\bas R01(dxdy).
$$ 
This completes the proof of the theorem.
\end{proof}

The specific structure of $P$ which appears in  (\ref{eq-PhR}) can be regarded as a time-symmetric version 
of the  $h$-transform introduced by Doob  in \cite{Doob57}:
$$
P(\cdot \mid  X_0=x)= \frac{ h(x,X_1)}{c_x} \, R(\cdot \mid  X_0=x) \, \textrm{ for } R_{0}\pp \, x\in\XX
$$ 
and, in a symmetric way,
$$
P(\cdot \mid  X_1=y)= \frac{ h(X_0,y)}{C_y} \, R(\cdot \mid  X_1=y) \, \textrm{ for } R_{1}\pp \, y\in\XX.
$$ 

\subsection{Markov \probmeas s of a reciprocal class} \label{sec:MarkReciprocal}

Since the Markov property is more restrictive than the reciprocal property, 
it is interesting to describe the subset of $\Rec_{ac} (R)$ composed by the Markov measures. 
In other words, one is looking for the specific mixtures of \probmeas s which preserve Markov property.

If a \probmeas\  in $\Rec_{ac} (R)$ admits a density with respect to $R$ which is decomposable into a product
as in (\ref{eq-fg}), then it is Markov. Indeed this property is (almost) characteristic as will be stated below in Theorem \ref{res-20}. 
The first (partial) version of  Theorem \ref{res-20} can be found in   \cite[Thm.\,3.1]{Jam74}
when $R$ admits a strictly positive transition density: 
$P \in \Rec(R)$  if and only if there exist two  \probmeas s $\nu_0$ and $\nu_1$ on $\XX$  such that 
$$
\bas P01(dx dy) = r(0,x;1,y) \nu_0 (dx) \nu_1 (dy).
$$
Jamison  \cite[p.\,324]{Jam75}  commented on this structure as that of an ``$h$-path process in the sense of Doob''.
In the general framework of Markov field this result was proved in \cite[Thm.\,4.1]{DY78}.

Our statement emphasizes the role of  condition
\eqref{eq-x01} which, up to our knowledge,  comes out  for the first time.


\begin{thm}\label{res-20}
Let   $R$ and $P$ be two probability  measures on $\OO$ and suppose that $R$ is Markov. Consider the following assertions:
\begin{enumerate}
    \item[(1)] The \probmeas\  $P$ belongs to $\Rec_{ac} (R)$ 
     and is Markov.
    \item[(2)] There exist two measurable nonnegative functions $f_0$ and $g_1$  such that
 \begin{equation}\label{eq-39b}
    \dPR=\,f_0(X_0) g_1(X_1), \quad R\pp 
\end{equation}
\end{enumerate}
Then,  (2) implies assertion (1). \\
If we suppose moreover that there exists $0<t_0<1$ and a measurable subset $\XX_o\subset\XX$ such that 
 $R _{t_0}(\XX_o)>0$ and for all $z\in \XX_o$,
\begin{equation}\label{eq-x01}
R _{01}(\cdot) \ll R_{01}^{t_0z} (\cdot):= R((X_0,X_1)\in\cdot|X _{t_0}=z),
\end{equation}
  then  (1) and (2) are equivalent.
\end{thm}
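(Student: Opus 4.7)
For the easy direction $(2) \Rightarrow (1)$, setting $h(x,y) := f_0(x) g_1(y)$ recasts the assumption as $\dPR = h(X_0, X_1)$, so $P \in \Rec_{ac}(R)$ by Theorem \ref{res-07}, while Theorem \ref{res-19} applied with $\alpha_t := f_0(X_0)$ (which is $\AAA 0t$-measurable for every $t$) and $\beta_t := g_1(X_1)$ (which is $\AAA t1$-measurable) yields that $P$ is Markov.

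For the harder direction $(1) \Rightarrow (2)$, my starting point is twofold. From $P \in \Rec_{ac}(R)$, Theorem \ref{res-07} provides a measurable $h : \XX^2 \to [0,\infty)$ with $\dPR = h(X_0, X_1)$ $R$-a.s.; and from $P$ being Markov, Theorem \ref{res-19} applied at the distinguished time $t_0$ yields
$$h(X_0, X_1) = \alpha \, \beta, \quad R\text{-a.s.},$$
with $\alpha$ being $\AAA 0{t_0}$-measurable and $\beta$ being $\AAA {t_0}1$-measurable. The task is then to upgrade this past/future factorization into a product in $(X_0, X_1)$ only. I would rely on the Markov property of $R$ via Theorem \ref{res-02}(3) with $s = u = t_0$: conditionally on $X_{t_0}$, the $\sigma$-algebras $\AAA 0{t_0}$ and $\AAA {t_0}1$ are $R$-independent. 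Fix an admissible $z \in \XX_o$ for which the identity above persists $R(\cdot | X_{t_0} = z)$-a.s. and for which $\alpha, \beta$ are separately integrable under this conditional law. Taking the conditional expectation of both sides given $(X_0, X_1)$ under $R(\cdot | X_{t_0} = z)$, and using that $X_0$ is past-measurable and $X_1$ is future-measurable so that this independence carries over to a factorization of the conditional expectation, one obtains
$$h(x,y) = f_0^z(x) \, g_1^z(y), \quad R_{01}^{t_0 z}\text{-a.e. }(x,y),$$
where $f_0^z(x) := E_R[\alpha \mid X_0 = x, X_{t_0} = z]$ and $g_1^z(y) := E_R[\beta \mid X_1 = y, X_{t_0} = z]$. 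Hypothesis \eqref{eq-x01} then gives $R_{01} \ll R_{01}^{t_0 z}$, so that this factorization actually holds $R_{01}$-a.e.; setting $f_0 := f_0^{z_0}$, $g_1 := g_1^{z_0}$ for one fixed admissible $z_0 \in \XX_o$ closes the argument.

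Two technical points warrant care, and the second is the genuine conceptual obstacle. First, the separate integrability of $\alpha$ and $\beta$ under $R(\cdot | X_{t_0} = z)$ is not automatic -- only the product $\alpha\beta$ is \emph{a priori} $R$-integrable. Under the conditional independence of past and future, however, Tonelli's theorem forces separate integrability except in degenerate cases where one factor vanishes $R(\cdot | X_{t_0} = z)$-almost surely; this is parallel to the content of Lemma \ref{res-24}, and the degenerate cases are excluded on a positive-$R_{t_0}$-mass subset of $\XX_o$ by observing via \eqref{eq-x01} that $h(X_0, X_1)$ cannot vanish identically on $\{X_{t_0} \in \XX_o\}$ without contradicting $\int h\, d R_{01} = 1$. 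Second and more essentially, hypothesis \eqref{eq-x01} is precisely the mechanism that transports the factorization of $h$ from the conditional slice $R_{01}^{t_0 z}$ to the full joint law $R_{01}$: without such a domination at \emph{some} time $t_0$ and \emph{some} state $z$, the factorization would live only on a potentially proper sub-distribution of $R_{01}$, and no canonical product representation on all of $\supp(R_{01})$ would follow.
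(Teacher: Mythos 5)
Your proof is correct and follows essentially the same route as the paper: both combine Theorem \ref{res-07} (giving $dP/dR=h(X_0,X_1)$) with Theorem \ref{res-19} applied at $t_0$ (giving the past/future factorization $h(X_0,X_1)=\alpha\beta$), then slice at a single state $z_o\in\XX_o$ and invoke hypothesis \eqref{eq-x01} to transport the resulting product form from the conditional law $R_{01}^{t_0 z_o}$ back to $R_{01}$. The only difference is one of execution: you obtain the slice factorization by conditioning on $(X_0,X_1)$ under $R(\cdot\mid X_{t_0}=z)$ and using the conditional independence of $\AAA 0{t_0}$ and $\AAA {t_0}1$ given $X_{t_0}$, whereas the paper first reduces $\alpha$ and $\beta$ to functions $a(X_0,X_{t_0})$ and $b(X_{t_0},X_1)$ and then evaluates the identity $a(x,z)b(z,y)=h(x,y)$ at $z=z_o$ — your conditional-expectation computation in fact makes explicit a reduction step that the paper states without detailed justification.
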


\begin{proof}
 \boulette{$(2) \Rightarrow (1)$} It is contained in Example \ref{ex-2.16}. Note that Hypothesis \eqref{eq-x01} is not necessary.

    \Boulette{$(1) \Rightarrow (2)$} 
Since  $P$ is  Markov, Theorem \ref{res-19} applied with $t=t_0$ leads to
\begin{equation} \label{eq-albeta}
\dPR=\alpha(\XXX 0{t_0})\beta(\XXX {t_0}1) \quad  R\pp
\end{equation}
 with $\alpha$ and
$\beta$ two measurable nonnegative functions.  
But, since $P$ belongs to the reciprocal family of  $R$,
following Theorem \ref{res-07}, its  Radon-Nikodym derivative 
is
$$\dPR=h(X_0,X_1)$$ for some measurable nonnegative function $h$ on $\XX^2.$ This implies that
$$
\alpha(\XXX 0{t_0})\beta(\XXX {t_0}1)=h(X_0,X_1),\quad R\pp
$$
which in turns implies that the functions $\alpha$ and $\beta$ have the form
$$
\alpha(\XXX 0{t_0})=a(X_0,X_{t_0})\textrm{ and }
\beta(\XXX {t_0}1)=b(X_{t_0},X_1),\  R\pp
$$ 
with $a$ and $b$ two measurable nonnegative functions
 on $\XX^2.$ 
It follows that
$$a(x,z)b(z,y)=h(x,y) \quad \forall (x,z,y) \in \mathcal {N}^c\subset\XX^3,
$$
 where the set $\mathcal {N}\subset\XX^3$ is $R_{0,t_0,1}$-negligible. 
Now, with the notation
$$
\mathcal {N}_z:=\left\{(x,y);(x,z,y)\in \mathcal {N}\right\} \subset\XX^2,
$$
 we obtain
$$
0=R _{0,t_0,1}(\mathcal {N})
	=\IX R_{01}^{t_0z}( \mathcal{N}_z )\,R _{t_0}(dz)
$$
which implies that 
	$R_{01}^{t_0z}( \mathcal{N}_z) =0$ for $ R _{t_0}\pt \, z \in \XX_0$. Due to condition \eqref{eq-x01}, one deduces that there exists $z_o\in \XX_o$ such that  $R _{01}( \mathcal{N}_{z_o})=0.$ Taking  $f_0=a(\cdot, z_o)$ and $g_1=b(z_o,\cdot)$, we see that 
$$ 
h(x,y)=f_0(x)g_1(y), \quad \bas R01(dxdy)\pp,
$$ which proves that $dP/dR$ has the form  expressed in (\ref{eq-39b}).
\end{proof}

\begin{remarks}\label{rem-11} $\mbox{}$ 

\begin{enumerate}[(a)]
\item
Since  $R$ is Markov, condition \eqref{eq-x01} is equivalent to 
\begin{equation*}
  \forall z\in \XX_o, \quad R _{01}(\cdot)\ll R( X_0\in\cdot|X_{t_0}=z)\otimes R(X_1\in\cdot|X _{t_0}=z) .
\end{equation*}
\item
Without any additional condition on  $R$, both assertions of the above theorem fail to be equivalent. 
We provide a counter-example by constructing  a \probmeas\   $R$ which does not satisfy condition \eqref{eq-x01}
and a Markov \probmeas\  $P$ whose density with respect to  $R$ is not of the form \eqref{eq-39b}. \\
 Let  $R$ be the Markov \probmeas\  with state space $\XX=\left\{\textsf{a} , \textsf{b} \right\} $,  initial law
 $R_0=(\delta_\textsf{a} +\delta_\textsf{b} )/2$ and infinitesimal generator 
$ \begin{pmatrix}
     0&0\\
     \lambda & -\lambda
    \end{pmatrix} $ for some  $\lambda>0$.
The support of  $R$ is concentrated on two types of paths: the paths that are identically equal to $\textsf{a} $ or  $\textsf{b} $, and the other ones that  start from  $\textsf{b} $ 
with one jump onto  $\textsf{a} $ after an exponential waiting time in $(0,1)$ with law $\mathcal{E}(\lambda)$.  We see that  $R$ does not satisfy  \eqref{eq-x01}.
Indeed, for all  $t \in (0,1)$,
\begin{enumerate}
\item
$
R_{01}^{t\textsf{a}} ( \textsf{b},\textsf{b} )  =0$, but 
$R_{01} ( \textsf{b},\textsf{b})  = \frac{e^{-\lambda}}{2}>0.$ Thus, $R_{01} \not\ll R_{01}^{t\textsf{a}} .
$
\item
$
R_{01}^{t\textsf{b}} ( \textsf{a},\textsf{a})  =0$,  but  
$R_{01} ( \textsf{a},\textsf{a})  = \frac{1}{2}>0.$ Thus,  $R_{01} \not\ll R_{01}^{t\textsf{b }} .
$
\end{enumerate}
Consider the Markov \probmeas\  $P$ which gives half mass  to the deterministic constant paths equal to $\textsf{a} $ or   $\textsf{b} $.
It is dominated by  $R$ with density:
 $\displaystyle{\frac{dP}{dR}}=\left\{\begin{array}{ll}
1,& \textrm{if } X\equiv  \textsf{a} \\
e^\lambda,& \textrm{if } X\equiv \textsf{b} \\
0,& \textrm{if } X_0 \not = X_1   
\end{array}\right.$.
 This density  $dP/dR$ does not have the product form  \eqref{eq-39b}, since the system\\
$\left\{\begin{array}{lcl}
f (\textsf{a})g(\textsf{a})&=&1\\
f(\textsf{b })g(\textsf{b})&=&e^\lambda\\
f(\textsf{b })g(\textsf{a})&=&0
\end{array}\right.$
admits no  solution.
Remark that the functions  $\alpha$ and $\beta$ defined in  \eqref{eq-albeta} could be chosen as follows:
 $\alpha(X)=\beta(X)=1$ if $X\equiv\textsf{a}$,  $\alpha(X)=1$ if $X\equiv \textsf{b}$, $ \beta(X) = e^\lambda$ 
 if $X\equiv \textsf{b}$ and $\alpha(X)=\beta(X)=0$ otherwise. 
\end{enumerate}
\end{remarks}

\section{Reciprocal \probmeas s are solutions of  entropy minimizing problems}\label{sec-ent}

\newcommand{\ph}{\widehat{\pi}}
\newcommand{\Ph}{\widehat{P}}

We conclude this survey paper  going back to the problem that was originally addressed by  Schrödinger  in \cite{Sch31} and developed in  \cite{Sch32}.
It was the starting point of the theory of time-reversed Markov  \cite{Kol36} and reciprocal diffusion processes. A modern formulation of Schrödinger's
problem   is stated below at \eqref{Sdyn}.

 Motivated by a probabilistic solution of this problem, Bernstein \cite{Bern32} introduced the notion of reciprocal process. 
 It is likely that Bernstein wasn't aware of the fact that \eqref{Sdyn}'s solution  is not only reciprocal, but also Markov as was clearly demonstrated four
decades later  by Jamison  in \cite{Jam75}.

The new ingredient of this section is the relative entropy, or Kullback-Leibler divergence, introduced in \cite{KL51}. The relative entropy of a 
\probmeas\  $p$ with respect to another \probmeas\   $r$ on a measurable space  $\YY$ is given by 
\begin{equation*}
H(p|r):=\int_\YY \log \left(\frac{dp}{dr}\right) \,dp\in[0,+\infty]
\end{equation*}
when $p$ is dominated by $r$, and $+ \infty$ otherwise.

\subsection{Schrödinger's problem} \label{subsec:Schrproblem}

This problem is of a statistical physics nature.

\subsubsection*{Dynamical and static formulations of Schrödinger's problem}

Let us sketch some results which are presented in detail in the review paper \cite{Leo12e} (see also \cite{Ae96} and the references therein too).
The modern dynamical formulation  of  Schrödinger's problem  is as  follows.
Take a reference \probmeas\  $R$ on $\OO=D([0,1],\XX)$  and fix two probability measures $\mu_0$, $\mu_1$ on  $\XX$   (the marginal constraints). 
The aim is to minimize   $P \mapsto H(P|R) $ where $P$ varies in  the set of all path \probmeas s such that   $P_0=\mu_0$ and $ P_1=\mu_1$. 
A concise statement of Schrödinger's dynamical problem is
\begin{equation}\label{Sdyn}
H(P|R)\rightarrow \textrm{min};\qquad P\in \textrm{Proba}(\Omega): P_0=\mu_0, P_1=\mu_1
\tag{S$_{\mathrm{dyn}}$}
\end{equation}
Projecting via $(X_0,X_1)$ this variational problem onto the set $\XX^2$ of endpoint configurations, one obtains the following  associated  
static formulation: 
minimize $\pi \mapsto H(\pi|R _{01})$, where  $\pi$ is subject to vary in  the set of all probability measures on $\XX^2$ with prescribed marginals
$\pi_0(dx):=\pi(dx\times\XX)=\mu_0$ and $ \pi(dy):=\pi(\XX\times dy)=\mu_1$.  A concise statement of
Schrödinger's  static problem is
\begin{equation}\label{S}
H(\pi|R _{01})\rightarrow \textrm{min};\qquad \pi\in \textrm{Proba}(\XX^2): \pi_0=\mu_0, \pi_1=\mu_1
\tag{S}
\end{equation}
Let us recall the uniqueness result  \cite[Prop.\,2.3]{Leo12e} which was proved by F\"ollmer \cite{Foe85} in the special case of a Brownian
diffusion with drift.

\begin{prop}[]\label{res-Foellmer}
The dynamical and static  Schrödinger problems 
each admit at most one solution $\Ph$ and  $\ph$.
 If   $\Ph$ denotes the  solution of \eqref{Sdyn}, then  $\ph=\Ph _{01}$ is the solution of \eqref{S}. 
Conversely, if  $\ph$ solves  \eqref{S}, then the solution of  \eqref{Sdyn} is
\begin{equation}\label{eq-S04}
\Ph(\cdot)=\IXX R ^{xy}(\cdot)\,\ph(dxdy) \in \Rec_{ac}(R).
\end{equation}
\end{prop}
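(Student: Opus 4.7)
The plan is to exploit two ingredients: the strict convexity of relative entropy (for uniqueness) and a chain-rule identity obtained by disintegrating path measures along their endpoint positions (to pass between the dynamical and static problems). Assumption (A$_1$) guarantees that the bridges $R^{xy}$ are defined everywhere, so the disintegration $R = \int_{\XX^2} R^{xy}\,R_{01}(dxdy)$ is pointwise meaningful and can be compared to the analogous disintegration $P = \int_{\XX^2} P^{xy}\,P_{01}(dxdy)$.

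The first step would be to establish the additivity formula
\begin{equation*}
H(P|R) = H(P_{01}|R_{01}) + \int_{\XX^2} H(P^{xy}|R^{xy})\,P_{01}(dxdy).
\end{equation*}
This reduces to the standard observation that $P \ll R$ is equivalent to $P_{01} \ll R_{01}$ together with $P^{xy} \ll R^{xy}$ for $P_{01}$-a.e.\ $(x,y)$, coupled with the multiplicative factorization
$\frac{dP}{dR}(\omega) = \frac{dP_{01}}{dR_{01}}(X_0(\omega),X_1(\omega))\,\frac{dP^{X_0(\omega),X_1(\omega)}}{dR^{X_0(\omega),X_1(\omega)}}(\omega)$.
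Taking logarithms and integrating against $P$ yields the identity.

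Restricting next to $P$ satisfying $P_0 = \mu_0, P_1 = \mu_1$, I notice that the first term of the chain rule depends only on $\pi := P_{01}$ (which has marginals $\mu_0, \mu_1$), while the second term is nonnegative and vanishes precisely when $P^{xy} = R^{xy}$ for $\pi$-a.e.\ $(x,y)$, i.e.\ when $P = \int_{\XX^2} R^{xy}\,\pi(dxdy)$. This immediately shows that $\inf\eqref{Sdyn} = \inf\eqref{S}$; any minimizer of \eqref{Sdyn} must have the form \eqref{eq-S04} with endpoint marginal $\widehat{\pi}$ solving \eqref{S}, and conversely any minimizer $\widehat{\pi}$ of \eqref{S} yields a minimizer $\widehat{P}$ of \eqref{Sdyn} via the same formula. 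Finite entropy of the static minimizer forces $\widehat{\pi} \ll R_{01}$, so that $\widehat{P} \in \Rec_{ac}(R)$ by Definition \ref{def-02}. Uniqueness in both problems then follows from the strict convexity of $u \mapsto u \log u$, which transfers to strict convexity of $H(\cdot|R_{01})$ and $H(\cdot|R)$ on their effective domains: the midpoint of two distinct minimizers would yield strictly smaller entropy on the convex feasible set, a contradiction.

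The main obstacle I foresee is the rigorous justification of the chain-rule identity, in particular the factorization of Radon-Nikodym derivatives under endpoint disintegration and the careful handling of $P_{01}$-negligible sets of "bad" endpoint pairs where the bridge absolute-continuity may fail. This is standard but delicate measure theory on Polish spaces; Assumption (A$_1$), which gives pointwise meaning to each $R^{xy}$, is precisely what makes the argument go through without further technicalities.
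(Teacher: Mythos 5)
Your proposal is correct and follows essentially the same route as the paper's own sketch: the additivity (disintegration) formula for relative entropy along the endpoint marginal, the observation that the residual term is nonnegative and vanishes exactly when $P^{xy}=R^{xy}$ for $P_{01}$-a.e.\ $(x,y)$, and uniqueness from strict convexity. The extra care you devote to the chain rule and to $\widehat{\pi}\ll R_{01}$ is a welcome elaboration of what the paper leaves implicit, but it is not a different argument.
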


\begin{proof}[Sketch of the proof]
As   \emph{strictly} convex minimization  problems,  \eqref{Sdyn} and  \eqref{S} admit at most one solution.
Using the disintegration formula 
\begin{equation*}
H(P|R)=H(P _{01}|R _{01})+ \IXX H(P ^{xy}|R ^{xy})\,P _{01}(dxdy),
\end{equation*}
one obtains 
$
H(P _{01}|R _{01})\le H(P|R)
$
with equality  (when $H(P|R)<+\infty$) if and only if 
$
P ^{xy}=R ^{xy}
$
for $P _{01}$-almost all $(x,y)\in\XX^2$, which corresponds to $P \in \Rec_{ac}(R)$.
Thus, $\Ph$ is the solution of  \eqref{Sdyn} if and only if it disintegrates as \eqref{eq-S04}.
\end{proof}

\subsubsection*{The solution of  \eqref{Sdyn} is Markov.
}

We present an existence (and uniqueness) result for \eqref{Sdyn} and \eqref{S} which is proved  in   \cite{Leo12e}.

\begin{thm}\label{res-Ph}
Let  $R$ be a reference Markov \probmeas\ with identical
\footnote{This restriction is done for simplifying the statements. We mostly have in mind a stationary reference measure $R$.} marginal laws at time 0 and 1,
denoted by $m$.
 Suppose that  $R$  satisfies the following assumptions:
\begin{enumerate}
\item[(i)]
there exists $0<t_0<1$ and a measurable set $\XX_o\subset\XX$ such that  $R _{t_0}(\XX_o)>0$ and 
\begin{equation*}
R _{01}\ll R\big((X_0,X_1)\in\cdot|X _{t_0}=z\big),\quad \forall z\in\XX_o.
\end{equation*}
\item[(ii)]
there exists a nonnegative measurable function $A$ on  $\XX$ such that 
$$R _{01}(dxdy)\ge e ^{-A(x)-A(y)}\, m(dx)m(dy).$$ 
\end{enumerate}
Suppose also that the constraints $\mu_0$ and $\mu_1$ satisfy 
$$
H(\mu_0|m)+  H(\mu_1|m)<+\infty \textrm{ and }
\IX A\,d \mu_0 + \IX A\, d \mu_1<+\infty.
$$ 
Then \eqref{S} admits a unique solution  $\ph$. It satisfies
\begin{equation*}
\ph(dxdy)=f_0(x)g_1(y)\, R _{01}(dxdy)
\end{equation*}
for some  $m$-measurable  nonnegative functions $f_0,g_1:\XX\to[0,\infty)$ which solve the so-called  \emph{ Schrödinger system}:
\begin{equation} \label{eq-S13}
\left\{\begin{array}{lcll}
f_0(x)\, E_R[g_1(X_1)\mid X_0=x]&=&d \mu_0/dm (x),&\ \textrm{ for } m\pt \ x\\
g_1(y) \, E_R[f_0(X_0)\mid X_1=y]&=&d \mu_1/dm (y),&\ \textrm{ for } m\pt \ y.
\end{array}\right.
\end{equation}
Moreover, \eqref{Sdyn} admits the unique solution
\begin{equation}\label{eq-S15}
\widehat{P}=f_0(X_0)g_1(X_1)\,R.
\end{equation}
It inherits the  Markov property from $R.$
\end{thm}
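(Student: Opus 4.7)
The plan is to first solve the static problem \eqref{S}, then transfer the solution to the dynamical problem \eqref{Sdyn} through Proposition \ref{res-Foellmer}, and finally deduce the Markov property of $\widehat{P}$ from its product structure using Theorem \ref{res-20}. Uniqueness of $\widehat{\pi}$ follows at once from the strict convexity of $\pi \mapsto H(\pi|R_{01})$ on the convex set of probability measures on $\XtX$ with marginals $\mu_0$ and $\mu_1$; by Proposition \ref{res-Foellmer}, uniqueness of $\widehat{P}$ follows automatically.

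For existence, I would first check that the admissible set contains a measure of finite entropy. The product $\mu_0 \otimes \mu_1$ has the right marginals, and assumption (ii) combined with $H(\mu_0|m)+H(\mu_1|m)<\infty$ and $\int A\,d\mu_0+\int A\,d\mu_1<\infty$ gives
\begin{equation*}
H(\mu_0\otimes\mu_1|R_{01}) \le H(\mu_0|m)+H(\mu_1|m)+\int A\,d\mu_0+\int A\,d\mu_1 < +\infty.
\end{equation*}
Lower semicontinuity and coercivity of the relative entropy (the marginal constraints keep minimizing sequences tight via $\mu_0,\mu_1$) then produce a minimizer $\widehat{\pi}$.

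The main step is to identify the form $\widehat{\pi}(dxdy)=f_0(x)g_1(y)R_{01}(dxdy)$. I would obtain this by writing first-order optimality conditions: for any bounded measurable $\varphi(x)+\psi(y)$ with $\IX \varphi\,d\mu_0+\IX \psi\,d\mu_1=0$, perturbations $\widehat\pi_\varepsilon\propto(1+\varepsilon(\varphi+\psi))\widehat\pi$ preserve both marginals to first order, and the Euler--Lagrange condition forces $\log(d\widehat\pi/dR_{01})$ to be almost everywhere of the form $u(x)+v(y)$. Setting $f_0=e^u$, $g_1=e^v$ gives the claimed decomposition, and translating the marginal constraints $\widehat{\pi}_0=\mu_0$, $\widehat{\pi}_1=\mu_1$ through the Markov property of $R$ yields exactly the Schrödinger system \eqref{eq-S13}. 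The hard part — and the one that genuinely uses assumption (ii) — is making this heuristic rigorous in the possibly unbounded setting: one needs $f_0$ and $g_1$ to be measurable with values in $[0,+\infty]$ such that the conditional expectations in \eqref{eq-S13} are well-defined and finite on the supports of $\mu_0, \mu_1$. The lower bound $R_{01}\ge e^{-A(x)-A(y)}m\otimes m$ and the $A$-moment hypothesis give the integrability estimates needed to run either a Sinkhorn-type alternating projection scheme, or a direct convex duality argument, to completion.

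Once $\widehat{\pi}$ is known to have the product form, Proposition \ref{res-Foellmer} yields
\begin{equation*}
\widehat{P}(\cdot)=\IXX \pont Rxy(\cdot)\,\widehat{\pi}(dxdy) \in \Rec_{ac}(R),
\end{equation*}
and Theorem \ref{res-07} applied with $h(x,y)=f_0(x)g_1(y)$ rewrites this as
\begin{equation*}
\widehat{P}=f_0(X_0)g_1(X_1)\,R,
\end{equation*}
which is \eqref{eq-S15}. Finally, since $R$ is Markov, Example \ref{ex-2.16} (a direct consequence of Theorem \ref{res-19}) shows that any path measure of the form $f_0(X_0)g_1(X_1)R$ is again Markov, establishing the last assertion. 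The main obstacle in the whole argument is the existence step for the Schrödinger system \eqref{eq-S13}: verifying that the Lagrange-type functions $f_0,g_1$ built from the optimality conditions are finite and measurable, and that they reproduce the prescribed marginals exactly — this is where assumption (ii) and the $A$-moment hypothesis are indispensable.
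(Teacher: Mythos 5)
First, a point of reference: the paper does not actually prove Theorem \ref{res-Ph}; it states the result and defers the proof to \cite{Leo12e}. So there is no internal proof to compare against, and your proposal has to stand on its own. Your overall architecture is the right one and matches the surrounding text of the paper: solve the static problem \eqref{S}, transfer to \eqref{Sdyn} via Proposition \ref{res-Foellmer} and Theorem \ref{res-07}, and get the Markov property of $\widehat{P}=f_0(X_0)g_1(X_1)R$ from Example \ref{ex-2.16}. Your finite-entropy estimate $H(\mu_0\otimes\mu_1|R_{01})\le H(\mu_0|m)+H(\mu_1|m)+\int A\,d\mu_0+\int A\,d\mu_1$ is correct (using $m\otimes m\ll R_{01}$ from (ii) and $\mu_i\ll m$ from finite entropy), and together with lower semicontinuity and tightness it does give existence and uniqueness of $\widehat{\pi}$. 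Up to that point the argument is sound.

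The genuine gap is in the identification of the product form, which you acknowledge is the hard part but then leave essentially unproved. Two concrete problems. First, the perturbation $\widehat{\pi}_\varepsilon\propto(1+\varepsilon(\varphi(x)+\psi(y)))\widehat{\pi}$ does not preserve the marginals, even to first order: the first marginal of $(\varphi(x)+\psi(y))\widehat{\pi}(dxdy)$ is $\big(\varphi(x)+E_{\widehat\pi}[\psi(X_1)\mid X_0=x]\big)\mu_0(dx)$, which does not vanish under your normalization $\int\varphi\,d\mu_0+\int\psi\,d\mu_1=0$. The correct first-order argument perturbs along $\nu-\widehat{\pi}$ for admissible $\nu$, and even then it only yields that $\log(d\widehat{\pi}/dR_{01})$ is additively separable \emph{on the support of} $\widehat{\pi}$ in an a.e.\ sense; extracting globally defined measurable functions $f_0,g_1$ on $\XX$ is exactly where the difficulty lies. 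Second, and relatedly, your proof never uses assumption (i), yet it cannot be dispensed with: (i) is the same irreducibility-type condition \eqref{eq-x01} that makes Theorem \ref{res-20} work, and Remark \ref{rem-11}(b) exhibits a Markov $R$ violating it for which a density of the form $h(X_0,X_1)$ of a Markov measure does \emph{not} factorize as $f_0(X_0)g_1(X_1)$. An argument that reaches the factorized form without invoking (i) proves too much and is therefore incomplete. To close the gap you would need to carry out the convex-duality or alternating-projection scheme you allude to, showing where (i) enters to glue the local additive decomposition into the global functions $f_0,g_1$ solving \eqref{eq-S13}; as it stands, the central claim of the theorem is asserted rather than derived.
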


\begin{remark}
In the Schrödinger system, $E_R[f_0(X_0)\mid X_1]$ and $E_R[g_1(X_1)\mid X_0]$ are well defined even if 
  $f_0(X_0)$ and $g_1(X_1)$ are not $R$-integrable. In fact,  $f_0$ and $g_1$ are measurable and  nonnegative; 
  therefore, only positive  integration is needed, see \cite{Leo12b}.
\end{remark}

Generalizing Proposition \ref{res-Foellmer}, we obtain without additional effort the following result.
\begin{cor}
Let $R$ be any reciprocal \probmeas.
The solution $\Ph$ of the variational problem  \eqref{Sdyn}, if it exists, belongs to the reciprocal family $\Rec_{ac}(R)$.
\end{cor}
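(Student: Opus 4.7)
The plan is to reproduce the argument of Proposition \ref{res-Foellmer} almost verbatim, after observing that the Markov property of the reference measure played no role in that proof: it relied only on the additive disintegration of relative entropy along the endpoint projection and on the existence of a measurable, everywhere-defined family of bridges $(R^{xy})_{x,y\in\XX}$. Both ingredients are already built into the standing hypothesis that $R$ is reciprocal in the sense of Definition \ref{def-clasrec}.

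Concretely, I would start from a putative solution $\widehat{P}$ of \eqref{Sdyn}, note that $H(\widehat{P}|R)<+\infty$ forces $\widehat{P}\ll R$ and a fortiori $\widehat{P}_{01}\ll R_{01}$, and then introduce the companion path measure
$$
P^{\sharp}(\cdot):=\IXX R^{xy}(\cdot)\,\widehat{P}_{01}(dxdy).
$$
By construction $P^{\sharp}\in \Rec_{ac}(R)$ with mixing measure $\widehat{P}_{01}$, and $P^{\sharp}$ has the same time-$0$ and time-$1$ marginals $\mu_0,\mu_1$ as $\widehat{P}$, so $P^{\sharp}$ is again admissible for \eqref{Sdyn}.

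The next step would be to apply the disintegration identity
$$
H(P|R)=H(P_{01}|R_{01})+\IXX H(P^{xy}|R^{xy})\,P_{01}(dxdy)
$$
to both $\widehat{P}$ and $P^{\sharp}$. Since $(P^{\sharp})^{xy}=R^{xy}$ for $\widehat{P}_{01}$-almost every $(x,y)$, the integral term vanishes at $P^{\sharp}$, yielding $H(P^{\sharp}|R)=H(\widehat{P}_{01}|R_{01})\le H(\widehat{P}|R)$. Combining this with the optimality of $\widehat{P}$ and the at-most-one-solution statement of Proposition \ref{res-Foellmer} forces $\widehat{P}=P^{\sharp}$, hence $\widehat{P}\in\Rec_{ac}(R)$.

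The only delicate point, exactly as in the proof of Proposition \ref{res-Foellmer}, is to justify the entropy disintegration when $\widehat{P}_{01}$ is not necessarily equivalent to $R_{01}$; this is a routine measure-theoretic verification provided the map $(x,y)\mapsto R^{xy}$ is measurable and defined everywhere, which is precisely what we assumed in order to speak of $\Rec_{ac}(R)$ at all. No further property of $R$ (Markovianity, strict positivity of transition densities, \textit{etc.}) is needed for the argument to go through.
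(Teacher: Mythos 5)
Your proposal is correct and takes essentially the same route as the paper: the corollary is presented there as following ``without additional effort'' from Proposition \ref{res-Foellmer}, whose sketched proof is precisely the entropy-disintegration argument you reproduce, and which indeed never invokes the Markov property of $R$. The only cosmetic difference is that you pass through the auxiliary measure $P^{\sharp}$ and the uniqueness statement, whereas the paper reads off $\widehat{P}^{xy}=R^{xy}$ directly from the equality case of the disintegration; both are fine.
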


\subsubsection*{A connection between Schrödinger's problem and PDEs } \label{JC} 

We give a PDE interpretation of the time-marginal flow $(\Ph_t)_{0\le t\le 1}$ of the solution $\Ph$ of \eqref{Sdyn}, with the aim of  clarifying its dynamical content.
Let us come back to Example \ref{ex-ponts}-(i), where $\XX=\R,$ $m(dx)=dx$, $R$ denotes the (unbounded\footnote{See \cite{Leo12b} for the technical modifications that are necessary to handle the case of an unbounded reference measure.}) reversible Wiener measure and $r(s,x;t,y)$ is its Gaussian kernel.
 Let us call $\rho_0(x) = \frac{d\mu_0}{dx}(x) $ and $\rho_1(y) = \frac{d\mu_1}{dy}(y) $.
 Then the system (\ref{eq-S13}) reduces to 
\begin{equation} \label{eq-S15bis}
\left\{\begin{array}{lcll}
f_0(x)\,\int r(0,x;1,y) g_1(y) \ dy &=& \rho_0(x) \\
g_1(y) \,\int f_0(x) r(0,x;1,y) \ dx &=& \rho_1(y) .
\end{array}\right.
\end{equation}
Schrödinger addressed the problem of the existence and uniqueness of solutions $(f_0,g_1)$ of this nonlinear system, given $r$ and the 
probabilistic boundary data 
$\rho_0$ and $\rho_1$. 
For $f_0$ and $g_1$ strictly positive, and $r$ considerably more general than the Gaussian kernel, 
introducing an entropy minimizing problem close to \eqref{S}, Beurling \cite{Be60} answered positively 
to this question. 
This was extended later by several authors, see \cite{FG97,Leo12e} for instance, taking advantage of the tight connection between \eqref{eq-S15bis} and
\eqref{S}. 
\\
Let us denote by $ f(t,z)$
 the solution of the parabolic initial value problem 
 \begin{equation} \label{eq-S16}
\left\{\begin{array}{ll}
(-\partial_t +\partial_{zz}^2 /2)f = 0, &0<t\le1 \\
f(0,\cdot)=f_0, &t=0
\end{array}\right.
\end{equation}
and by $g(t,z)$
 the solution of the adjoint final value problem 
\begin{equation} \label{eq-S17}
\left\{\begin{array}{ll}
(\partial_t +\partial_{zz}^2 /2)g = 0, &0\le t<1 \\
g(1,\cdot)=g_1, &t=1
\end{array}\right.
\end{equation}
Remark that $f(t,z)=E_R(f_0(X_0)\mid X_t=z)$ and $g(t,z)=E_R(g_1(X_1)\mid X_t=z).$
Thanks to the Markov property of $R$, Theorem \ref{res-02}-(3) entails that  for all $0\le t\le1,$ 
$$
\Ph_t(dz)=  f(t,z)g(t,z)\,dz.
$$
This relation is analogous to Born's formula: 
$$
\rho_t(dz)=\psi_t (z)\overline{\psi_t(z)}\,dz
$$ 
where $\rho$ is the probability of presence of the quantum particle and $\psi$ is the wave function. 
Indeed, as remarked in 1928 by the astrophysicist Eddington (this is quoted in \cite{Sch32}), 
the relation between  Schrödinger's equation and its  complex conjugate can be interpreted as time reversal. Therefore, $\psi_t$ and $\overline \psi_t$ can be interpreted as two wave functions
carrying respectively information from past and future. Indeed, they solve the standard quantum Schrödinger equation with respect to both directions of time.
Switching to the classical statistical physics problem \eqref{Sdyn}, one sees that the functions $f_t$ and $g_t$ share similar properties, replacing the
complex-valued Schrödinger equations in both directions of time  by the heat equations \eqref{eq-S16} and \eqref{eq-S17}.
This striking analogy was  Schrödinger's main motivation for introducing \eqref{Sdyn}. 
See \cite{Sch32} and also \cite{CZ08}, \cite[\S\,6,7]{Leo12e} for further detail.
\\
 Regarded as an element of $L^2(\R, dz)$ the solutions of (\ref{eq-S16}) and (\ref{eq-S17}) are analytic in the domain ${\mathcal Re} (t)>0$, continuous for 
 ${\mathcal Re} (t) \geq 0$ and their values on the imaginary axis respectively solve the (quantum mechanical)  Schrödinger equation and its complex conjugate.
 It is in this way that the Markov  measure $\Ph$ is a quantum-like \probmeas.
 The multiplicative structure of the density $d\Ph_t/dz$ appears as a stochastic analytic version of the complex conjugation of quantum functionals.
 When the Markov generator associated with $R$ is not self-adjoint, the same idea holds.  For (much) more on this PDE connection, see \cite{VZ12}. This quantum mechanical connection is the starting point of a stochastic deformation of classical mechanics \cite{Zam12}.

\subsection{A modification of Schrödinger's problem}

Having in mind these considerations  about Schrödinger's problem, it  appears that the notion of reciprocal measure was a technical intermediate 
step on the  way to the solution of \eqref{Sdyn}. Indeed,  Theorem \ref{res-Ph}  insures that   $\Ph$ is Markov, which is more specific than being reciprocal,
and its proof doesn't rely on the reciprocal property. Nevertheless, there exist  instances of  non-Markov reciprocal measures that are interesting in their own
right. Let us give a short  presentation of two problems relating entropy minimization and  reciprocal measures which are not Markov.

\subsubsection*{Reciprocal measures and entropy minimization}

Consider the following  modification of Schrödinger's problem
\begin{equation}\label{Spi}
H(P|R)\rightarrow \textrm{min};\qquad P\in \textrm{Proba}(\Omega): P _{01}=\pi
\tag{S$^{\pi}$}
\end{equation}
where $R$ is Markov and $\pi\in\textrm{Proba}(\XX^2)$ is given. Mimicking the sketch of proof of Proposition \ref{res-Foellmer}, 
it is easy to show that \eqref{Spi} admits a  solution  if and only if $H(\pi| R _{01})<\infty$ and that,  when this occurs,  this solution is unique and is
equal to
\begin{equation*}
\Rp(\cdot):=\IXX R ^{xy}(\cdot)\, \pi(dxdy).
\end{equation*}
When $\pi(dxdy)=f_0(x)g_1(y)\, R _{01}(dxdy)$
with $(f_0,g_1)$ solution of the Schrödinger system \eqref{eq-S13}, then $\eqref{Spi} = \eqref{Sdyn}$. By \eqref{eq-12bis}, we see that $\Rp$ belongs to the
reciprocal family $\Rec_{ac} (R)$ of $R.$ More precisely, when $\pi$ describes $\textrm{Proba}(\XX^2),$ defining 
$$
\Rec_H(R):=\left\{P: P \textrm{ solution of }\eqref{Spi} \textrm{ with } \pi\in \textrm{Proba}(\XX^2)\right\} ,
$$  
we see that
\begin{equation*}
\Rec_H(R)=\left\{\Rp;\quad  \pi\in\textrm{Proba}(\XX^2): H(\pi|R _{01})<\infty\right\} 
\end{equation*}
which is a little smaller than $\Rec_{ac}(R)$ for which $\pi$ is only required to satisfy $\pi\ll R _{01}.$ Notice that 
\begin{equation*}
\Rec_H(R)\subset\Rec_{ac}(R)\subset\Rec(R)
\end{equation*}
where these three classes are convex subsets of $\textrm{Proba}(\OO).$

\subsubsection*{Loop measures}

Example \ref{ex-loop} exhibits  a reciprocal loop measure $\Ploop=\IX R ^{xx}\,m(dx)$ which is   not Markov in general. 
Denoting $\pi_m(dxdy)=m(dx)\delta_x(dy)$, we see that $\Ploop=R ^{ \pi_m}.$

Remark that in the important case where $R$ is the reversible Brownian motion\footnote{See \cite{Leo12b} for the technical 
modifications that are necessary to handle the case of an unbounded reference measure.}, then $ \pi_m\not\ll R _{ 01}$ because $R _{ 01}(X_0=X_1)=0$ and
$\pi_m(X_0=X_1)=1.$ Consequently, (S$^{ \pi_m}$) has no solution. The endpoint constraint $\pi=\pi_m$ of \eqref{Spi}  is  degenerate  in the same way as $
(\mu_0,\mu_1)=(\delta_x,\delta_y)$ is a degenerate constraint of \eqref{Sdyn}. Indeed, both $\pi_m$ and $ \delta _{ (x,y)}$ verify $H(\pi_m|R _{ 01}),H(\delta
_{ (x,y)}|R _{ 01})< \infty$  and can be approximated by finite entropy constraints.

\subsubsection*{Stochastic representation of incompressible hydrodynamical flows}

Consider the following entropy minimization problem
\begin{equation}\label{eq-hydro}
H(P|R)\to \textrm{min};\qquad P\in \textrm{Proba}(\OO): P_t=m,\forall 0\le t\le 1, P _{01}=\pi
\end{equation}
which consists of minimizing the relative entropy $ H(P|R)$ of the path measure $P$ with respect to the Markov measure $R$ 
subject to the constraints that the time marginal flow $(P_t) _{ 0\le t\le 1}$ is constantly equal to a given  $m\in \textrm{Proba}(\XX)$ and that the endpoint
marginal $P_{ 01}$ is equal to a given $\pi\in \textrm{Proba}(\XX^2)$. This problem is a natural stochastization of Arnold's approach to the Euler equation for
incompressible fluids \cite{Arn66} which is connected to the Navier-Stokes equation. The justification of this assertion is part of a work in progress by two
of the authors. The incompressibility constraints is $P_t=m,\forall 0\le t\le 1$ when $m$ is the volume measure on the manifold $\XX$. The constraint $P
_{01}=\pi$ is Brenier's relaxation \cite{Bre89} of Arnold's final diffeomorphism. It can  be proved using the results of the present paper that  for a generic
endpoint constraint $\pi,$
the minimizer of \eqref{eq-hydro} (whenever it exists) is reciprocal but not Markov.

\section*{Acknowledgements}
The authors thank the referee for useful comments and especially for indicating the  reference \cite{DY78}.


\end{document}